\documentclass{amsart}

\usepackage{amssymb,latexsym,amsfonts,amsmath}
\usepackage{graphicx,psfrag,epsfig,epsf}
\include{diagrams}
\usepackage{eso-pic}
\usepackage{graphicx}
\usepackage{color}
\usepackage{diagrams}
\usepackage[boxed]{algorithm2e}
\usepackage{tikz}
\usetikzlibrary{automata}
\usepackage{subfigure}
\usepackage{amsfonts}

\usepackage{amssymb,latexsym,amsfonts,amsmath}
\usepackage{graphicx}

\topmargin  = 0.0 in
\leftmargin = 0.9 in
\rightmargin = 1.0 in
\evensidemargin = -0.10 in
\oddsidemargin =  0.10 in
\textheight = 8.5 in
\textwidth  = 6.6 in
\setlength{\parskip}{2mm}
\setlength{\parindent}{0mm}

\newtheorem{theorem}{Theorem}[section]

\newtheorem{proposition}[theorem]{Proposition}

\theoremstyle{definition}
\newtheorem{definition}[theorem]{Definition}

\theoremstyle{remark}

\numberwithin{equation}{section}

\newcommand{\Post}{\operatorname{Post}}

\newcommand{\Id}{\mathrm{Id}}

\newcommand{\diag}{\mathrm{diag}}

\newcommand{\vecc}{\mathrm{vec}}
\newcommand{\SCC}{\mathrm{SCC}}
\newcommand{\Leaves}{\mathrm{Leaves}}
\newcommand{\temp}{\mathrm{temp}}
\newcommand{\DAG}{\mathrm{DAG}}
\newcommand{\Scc}{\mathrm{Scc}}
\newcommand{\card}{\mathrm{card}}
\newcommand{\sech}{\mathrm{sech}}
\newcommand{\tcomplex}{\mathrm{Tcomplex}}
\newcommand{\scomplex}{\mathrm{Scomplex}}

\begin{document}

\begin{abstract}                          
In this paper we propose symbolic models for networks of discrete--time nonlinear control systems. If each subsystem composing the network admits an incremental input--to--state stable Lyapunov function and if some small gain theorem--type conditions are satisfied, 
a network of symbolic models, each one associated with each subsystem composing the network, is proposed which is approximately bisimilar to the original network with any desired accuracy. Quantization parameters of the symbolic models are derived on the basis of the topological properties of the network.  
\end{abstract}

\title[Compositional Symbolic Models for Networks of Incrementally Stable Control Systems]{Compositional Symbolic Models for Networks of \\ Incrementally Stable Control Systems}

\thanks{The research leading to these results has been partially supported by the Center of Excellence DEWS and received funding from the European Union Seventh Framework Programme [FP7/2007-2013] under grant agreement n.257462 HYCON2 Network of excellence.}

\author[Giordano Pola, Pierdomenico Pepe and Maria D. Di Benedetto]{
Giordano Pola$^{1}$, Pierdomenico Pepe$^{1}$ and Maria D. Di Benedetto$^{1}$}
\address{$^{1}$
Department of Information Engineering, Computer Science and Mathematics, Center of Excellence DEWS,
University of L{'}Aquila, 67100 L{'}Aquila, Italy}
\email{ \{giordano.pola,pierdomenico.pepe,mariadomenica.dibenedetto\}@univaq.it}

\maketitle

\section{Introduction}

Symbolic models are abstract descriptions of control systems where any state corresponds to an aggregate of continuous states and any control label to an aggregate of control inputs. The literature on symbolic models for control systems is very broad. Early results were based on dynamical consistency properties \cite{caines}, natural invariants of the control system \cite{koutsoukos}, $l$-complete approximations \cite{moor}, and quantized inputs and states \cite{forstner,BMP02}. Recent results include work on controllable discrete-time linear systems \cite{TabuadaLTL}, piecewise-affine and multi-affine systems \cite{habets,BH06}, set-oriented discretization approach for discrete-time nonlinear optimal control problems \cite{junge1}, abstractions based on convexity of reachable sets \cite{gunther}, incrementally stable and incrementally forward complete nonlinear control systems with and without disturbances \cite{PolaAutom2008,MajidTAC11,PolaSIAM2009,BorriIJC2012}, switched systems \cite{GirardTAC2010} and time-delay systems \cite{PolaSCL10,PolaIJRNC2014}. 
A limitation of some of the above results is that in practice they can only be applied to control systems with small dimensional state space. This is because the computational complexity arising in the construction of symbolic models often scales exponentially with the dimension of the state space of the control system considered. When internal interconnection structure of a control system is known, one can make use of this information with the purpose of reducing the computational complexity in deriving symbolic models. Indeed, once a symbolic model is constructed for each subsystem, one can then simply interconnect them to obtain a symbolic model of the original control system. 
In this paper we follow this approach and propose a network of symbolic models that approximates a network of discrete--time nonlinear control systems. 
In particular, if each subsystem composing the network admits an incremental input--to--state stable Lyapunov function and if some small gain theorem--type conditions are satisfied, a network of symbolic models, each one associated with each subsystem composing the network, is proposed which is approximately bisimilar to the original network with any desired accuracy. Quantization parameters of the symbolic models are derived on the basis of the topological properties of the network. Advantages of the proposed approach with respect to current literature are as follows. Firstly, our approach does not cancel topological properties of the network, which can be of great importance in the design process; for example, it allows incremental re-design of the system when new functionalities, e.g. energy sustainability or security, are added to an existing design or an error is discovered late in the design process. 
Secondly, the proposed approach simplifies the construction of symbolic models. Indeed we only require the knowledge of a $\delta$--ISS Lyapunov function $V_i$ for each subsystem $\Sigma_i$, and the satisfaction of some small gain theorem--type conditions for the strongly connected aggregates of subsystems. A single $\delta$--ISS Lyapunov function for the entire network is not needed to be found. This is especially useful when real-word complex systems are considered. From the computational complexity point of view, since we do not construct a symbolic model of the entire network, but symbolic models of each subsystem, whose composition approximates the original network for any desired accuracy, the resulting computational complexity scales linearly with the number of subsystems composing the network. We stress that composing symbolic models in the network is not always necessary for control design (and formal verification) purposes. In fact, by using the so--called \textit{on--the--fly} algorithms (e.g. \cite{onthefly3,onthefly2}, see also \cite{PolaTAC12}), a symbolic controller for the whole network can be designed without the need of constructing explicitly the whole symbolic model of the network. \\
Symbolic models for interconnected systems have been also proposed in \cite{Tazaki08}. This paper compares as follows with \cite{Tazaki08}. While \cite{Tazaki08} considers stabilizable input--state--output linear systems, this paper considers $\delta$--ISS nonlinear control systems. Moreover, while in \cite{Tazaki08} dynamical properties of control systems are not found for the quantization parameters to match certain conditions guaranteeing existence of approximately bisimilar symbolic models, this paper overcomes this drawback and identifies in small gain theorem--type conditions the key ingredient to construct approximately bisimilar networks of symbolic models. 

\section{Networks of Control Systems}

In this paper we consider a network of control systems given by the coupled difference equations $\Sigma_1,\Sigma_2,...,\Sigma_N$ described by:
\begin{equation}
\label{NetCS}
\Sigma_i :
\left\{
\begin{array}
{l}
x_{i}(t+1)=f_i(x_1(t),x_2(t),...,x_N(t),u_i(t)),\\
x_{i}(t) \in \mathcal{X}_i \subset \mathbb{R}^{n_i},u_{i}(t) \in \mathcal{U}_i \subset \mathbb{R}^{m_i}, t \in \mathbb{N}_{0}. 
\end{array}
\right.
\end{equation}

\noindent
Let $n=\sum_{i\in [1;N]}n_i$ and $m=\sum_{i\in [1;N]}m_i$. 
Functions $f_{i}:\mathbb{R}^{n} \times \mathbb{R}^{m_i} \rightarrow \mathbb{R}^{n_i}$ are assumed to be locally Lipschitz and satisfying $f_i(0_{n},0_{m_i})=0_{n_i}$.
Sets $\mathcal{X}_{i}$ and $\mathcal{U}_{i}$ are assumed to be convex, bounded and with interior.
For compact notation we refer to the network of control systems in (\ref{NetCS}) by the control system $\Sigma$ described by $x(t+1) = f(x(t),u(t))$, $x(t)\in \mathcal{X}\subset \mathbb{R}^{n}$, $u(t)\in \mathcal{U}\subset \mathbb{R}^{m}$, $t\in\mathbb{N}_{0}$, where $\mathcal{X}:=\times_{i\in [1;N]} \mathcal{X}_{i}$, $\mathcal{U}:=\times_{i\in [1;N]} \mathcal{U}_{i}$ and $f(x,(u_1,u_2,...,u_N)):=(f_1(x,u_1),f_2(x,u_2),...,f_N(x,u_N))$ for any $x\in \mathbb{R}^{n}$ and $(u_1,u_2,...,$ $u_N)\in \mathbb{R}^{m}$. Notation and some technical notions used in the sequel are reported in the Appendix.

\section{Results}

Define the directed graph $\mathcal{G}=(\mathcal{V},\mathcal{E})$ where $\mathcal{V}=[1;N]$ and $(i,j)\in \mathcal{E}$, if function $f_{j}$ of $\Sigma_{j}$ depends explicitly on variable $x_{i}$ or equivalently, there exist $y_{i},z_{i}\in \mathcal{X}_{i}$ such that  $f_{j}(x_1,...,x_{i-1},y_{i},$ $x_{i+1},...,x_n,u_{j}) \neq f_{j}(x_1,...,x_{i-1},z_{i},x_{i+1},...,x_n,u_{j})$.  
Let $\SCC(\mathcal{G})$ be the collection of strongly connected components $\Scc_k$ associated with $\mathcal{G}$; we define $\Scc_k=(\mathcal{V}_k,\mathcal{E}_k)$, $\overline{N}_k=\card(\mathcal{V}_k)$, $\overline{N}=\card(\SCC(\mathcal{G}))$ and  $\mathcal{V}_k=\{i(1,k),i(2,k),...,i(\overline{N}_k,k)\}$. We recall that by contracting each $\Scc_k$ to a vertex, a Directed Acyclic Graph ($\DAG$) is obtained. Given $\Scc_k \in \Scc \subseteq \SCC(\mathcal{G})$ we denote by $\Post(\Scc_k)$ the collection of strongly connected components that can be reached in one step by $\Scc_k$ and by $\Leaves(\Scc)$ the collection of $\Scc_k\in \Scc$ for which $\Post(\Scc_k)=\varnothing$. We denote by $\Post^{-1}$ the inverse map of operator $\Post$, i.e. $\Scc_k \in \Post^{-1}(\Scc)$ if and only if $\Scc \subseteq \Post (\Scc_k)$. For each $\Scc_k \in \SCC(\mathcal{G})$, define $\Xi_k= \times_{i\in\mathcal{V}_k} \mathcal{X}_i$, $\Omega_k= \times_{i\in\mathcal{V}_k} \mathcal{U}_{i}$, $\overline{n}_k=\sum_{i\in \mathcal{V}_k} n_{i}$ and $\overline{m}_k=\sum_{i\in\mathcal{V}_k}m_i$. Note that sets $\Xi_k$ and $\Omega_k$ are convex, bounded and with interior. 
The interconnection of control systems $\Sigma_{i(1,k)},\Sigma_{i(2,k)},...,\Sigma_{i(\overline{N}_k,k)}$ associated with each $\Scc_k\in \SCC(\mathcal{G})$, is denoted by 
\begin{equation}
\label{SCCi}
\Sigma_{\Scc_k} :
\left\{
\begin{array}
{l}
\xi_k(t+1)=\varphi_k(\xi_1(t),\xi_2(t),...,\xi_{\overline{N}}(t),\omega_k(t)),\\
\xi_k(t) \in \Xi_k\subset \mathbb{R}^{\overline{n}_{k}}, \omega_k(t) \in \Omega_k \subset \mathbb{R}^{\overline{m}_{k}}, t\in\mathbb{N}_{0}, 
\end{array}
\right.
\end{equation}

\noindent
where $\varphi_k: \mathbb{R}^n \times \mathbb{R}^{\overline{m}_{k}} \rightarrow  \mathbb{R}^{\overline{n}_k}$. 
The compositional approach that we take to build a network of symbolic models for $\Sigma$ in (\ref{NetCS}) is based on the following three steps: 
\textit{(Step \#1)} Construction of symbolic models for $\Sigma_i$ in Section \ref{subsec1}; \textit{(Step \#2)} Construction of symbolic models for $\Sigma_{\Scc_k}$ in Section \ref{subsec2}; \textit{(Step \#3)} Construction of symbolic models for $\Sigma$ in Section \ref{subsec3}.

\subsection{Symbolic models for subsystems $\Sigma_{i}$} \label{subsec1}
We start by providing a representation of each subsystem $\Sigma_i$ ($i\in [1;N]$) in terms of the system\footnote{The notion of system, taken from \cite{paulo}, is reported in the Appendix.} 
$S(\Sigma_i)=(X^{*}_i,W^{*}_i \times U^{*}_i,\rTo_{*,i},Y^{*}_i,H^{*}_i)$ where $X^{*}_i=\mathcal{X}_i$, 
$W^{*}_i=\mathcal{X}_1 \times \mathcal{X}_2 \times ... \times \mathcal{X}_{i-1} \times \mathcal{X}_{i+1} \times ... \times \mathcal{X}_{N}$, $U^{*}_i=\mathcal{U}_i$, $x_i \rTo_{*,i}^{(x_1,...,x_{i-1},x_{i+1},...,x_N,u_i)} x^{+}_i$ if $x^{+}_i=f_i(x_1,x_2,...,x_N,u_i)$, $Y^{*}_i=\mathcal{X}_i$ and $H^{*}_i(x_i)=x_i$. System $S(\Sigma_i)$ preserves many important properties of control system $\Sigma_i$, as for example reachability properties. System $S(\Sigma_i)$ is metric when we regard $Y^{*}_i=\mathcal{X}_i$ as being equipped with the metric $\mathbf{d}_i(x_i,x'_i)=\Vert x_i-x'_i\Vert$. Note that system $S(\Sigma_i)$ is not symbolic because the cardinality of sets $X^{*}_i$, $W^{*}_i$ and $U^{*}_i$ is infinite. We now define a suitable symbolic system that will approximate $S(\Sigma_i)$ with any desired precision. 

\begin{definition} 
Given $\Sigma_i$, $i\in [1;N]$ and a quantization vector $\eta \in\mathbb{R}^{+}_N$, define the system $S^{\eta}(\Sigma_i)=(X^{\eta}_i,W^{\eta}_i\times U^{\eta}_i,\rTo_{\eta,i},Y^{\eta}_i,H^{\eta}_i)$ where $X^{\eta}_i=[\mathcal{X}_i]_{\eta(i)}$, $W^{\eta}_i=[\mathcal{X}_1]_{\eta(1)} \times [\mathcal{X}_2]_{\eta(2)} \times ... \times [\mathcal{X}_{i-1}]_{\eta(i-1)} \times [\mathcal{X}_{i+1}]_{\eta(i+1)} \times ... \times [\mathcal{X}_N]_{\eta(N)}$, $U^{\eta}_i=[\mathcal{U}_i]_{\eta(i)}$, $x_i \rTo^{x_1,x_2,...,x_{i-1},x_{i+1},..., x_N,u_i}_{\eta,i} x^{+}_i$ if $x^{+}_i = [f_i(x_1,x_2,..., x_N,u)]_{\eta(i)}$, $Y^{\eta}_i=\mathcal{X}_i$ and $H^{\eta}_i(x_i)=x_i$.
\end{definition}

System $S^{\eta}(\Sigma_i)$ is metric when we regard $Y^{\eta}_i=\mathcal{X}_i$ as being equipped with the metric $\mathbf{d}_i(x_i,x'_i)=\Vert x_i-x'_i\Vert$. Moreover, since sets $\mathcal{X}_i$ and $\mathcal{U}_i$ are bounded then sets $X^{\eta}_i$, $W^{\eta}_i$ and $U^{\eta}_i$ are finite from which, system $S^{\eta}(\Sigma_i)$ is symbolic. Space and time complexity in computing the symbolic model $S^{\eta}(\Sigma_i)$ are given by 
$\scomplex (S^{\eta}(\Sigma_i))=\card (X^{\eta}_i)^{2} \cdot \card (W^{\eta}_i\times U^{\eta})$ and $\tcomplex (S^{\eta}(\Sigma_i))=\card (X^{\eta}_i) \cdot \card (W^{\eta}_i\times U^{\eta})$, respectively. 
In the sequel, we consider the following assumption:\\
(A1) For each $i\in [1;N]$, a locally Lipschitz function $V_i: \mathbb{R}^{n_i} \times \mathbb{R}^{n_i} \rightarrow \mathbb{R}^{+}_{0}$ exists for control system $\Sigma_i$, which satisfies the following inequalities for some $\mathcal{K}_{\infty}$ functions $\underline{\alpha}_i$, $\overline{\alpha}_i$, $\rho_i$ and $\mathcal{K}$ functions $\sigma_i$ and $\sigma_{i,j}$ ($j\in [1;N], i\neq j$):
\begin{itemize}
\item[(i)] $\underline{\alpha}_i(\left\Vert x_i-x'_i \right\Vert )\leq V_i(x_i,x'_i)\leq \overline{\alpha}_i(\left\Vert x_i-x'_i \right\Vert )$, for any $x_i,x_i'\in\mathbb{R}^{n_i}$; 
\item[(ii)] $V_i(f_i(x_1,x_2,...,x_N,u_{i}),f_i(x'_1,x'_2,...,x'_N,u'_{i})) - V_i(x_i,x'_i) \leq -\rho_i ( V_i(x_i,x'_i) ) + \\
\sum_{j\in [1;N],j\neq i} \sigma_{i,j} (\Vert x_j - x'_j \Vert ) + \sigma_i (\left\Vert u_{i}-u_{i}' \right\Vert )$, for any $x_j,x'_j\in\mathbb{R}^{n_j}$ ($j\in [1;N]$) and any $u_{i},u_{i}'\in \mathbb{R}^{m_{i}}$.
\end{itemize}
Function $V_i$ is called a $\delta$--ISS Lyapunov function \cite{IncrementalS,BayerECC2013} for control system $\Sigma_i$. The above assumption has been shown in \cite{BayerECC2013} to be a sufficient condition for the control system $\Sigma_i$ to fulfill the incremental input--to--state stability property \cite{IncrementalS,BayerECC2013}. We can now give the following preliminary result.

\begin{proposition}
\label{Th1}
Suppose that Assumption (A1) holds and let $L_i$ be a Lipschitz constant of function $V_i$ in $\mathcal{X}_i \times \mathcal{X}_i$. Then, for any desired precision $\varepsilon_i\in\mathbb{R}^{+}$ and for any $\eta \in \mathbb{R}^{+}_{N}$ satisfying the following inequalities
\begin{eqnarray}
& & L_i\, \eta(i) + \sum_{j\in [1;N],j\neq i} \sigma_{i,j} ( \eta(j) ) + \sigma_i(\eta(i)) \leq (\rho_i \circ \underline{\alpha}_i)(\varepsilon_i),\label{statem} \\
& & \overline{\alpha}_i(\eta(i))\leq \underline{\alpha}_i(\varepsilon_i), \label{statem1}
\end{eqnarray}
systems $S(\Sigma_i)$ and $S^{\eta}(\Sigma_i)$ are approximately bisimilar\footnote{The notion of approximate bisimulation, taken from \cite{AB-TAC07}, is recalled in the Appendix.} with precision $\varepsilon_i$.
\end{proposition}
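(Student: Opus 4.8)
The plan is to build an explicit $\varepsilon_i$-approximate bisimulation relation between $S(\Sigma_i)$ and $S^{\eta}(\Sigma_i)$ from the sublevel sets of the $\delta$--ISS Lyapunov function, namely
\[
R_i=\bigl\{(x_i,q_i)\in X^{*}_i\times X^{\eta}_i\ :\ V_i(x_i,q_i)\le\underline{\alpha}_i(\varepsilon_i)\bigr\},
\]
and to check that $R_i$ satisfies the conditions in the notion of approximate bisimulation recalled in the Appendix (its inverse being treated symmetrically). The output condition follows at once from (A1)(i): if $(x_i,q_i)\in R_i$, then $\underline{\alpha}_i(\Vert x_i-q_i\Vert)\le V_i(x_i,q_i)\le\underline{\alpha}_i(\varepsilon_i)$, whence $\mathbf{d}_i(H^{*}_i(x_i),H^{\eta}_i(q_i))=\Vert x_i-q_i\Vert\le\varepsilon_i$. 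For the requirement that every state of one system be $R_i$--related to some state of the other, note that on the one hand $X^{\eta}_i=[\mathcal{X}_i]_{\eta(i)}\subseteq\mathcal{X}_i=X^{*}_i$ and $V_i(q_i,q_i)=0$ by (A1)(i), so $(q_i,q_i)\in R_i$; on the other hand, given $x_i\in\mathcal{X}_i$ one picks $q_i\in[\mathcal{X}_i]_{\eta(i)}$ with $\Vert x_i-q_i\Vert\le\eta(i)$ and uses (A1)(i) together with (\ref{statem1}) to obtain $V_i(x_i,q_i)\le\overline{\alpha}_i(\eta(i))\le\underline{\alpha}_i(\varepsilon_i)$, i.e.\ $(x_i,q_i)\in R_i$.

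The heart of the proof is the transition--matching condition, to be verified in both directions; fix $(x_i,q_i)\in R_i$. For the direction $S(\Sigma_i)\rightarrow S^{\eta}(\Sigma_i)$, consider a transition of $S(\Sigma_i)$ issued from $x_i$ under a label $(x_1,\dots,x_{i-1},x_{i+1},\dots,x_N,u_i)$, with successor $x^{+}_i=f_i(x_1,\dots,x_N,u_i)$; choose quantized labels $x'_j\in[\mathcal{X}_j]_{\eta(j)}$ with $\Vert x_j-x'_j\Vert\le\eta(j)$ for $j\ne i$ and $u'_i\in[\mathcal{U}_i]_{\eta(i)}$ with $\Vert u_i-u'_i\Vert\le\eta(i)$, and set $q^{+}_i=[f_i(x'_1,\dots,x'_{i-1},q_i,x'_{i+1},\dots,x'_N,u'_i)]_{\eta(i)}$, which is a legitimate successor of $q_i$ in $S^{\eta}(\Sigma_i)$. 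Combining the triangle inequality, the Lipschitz bound for $V_i$ on $\mathcal{X}_i\times\mathcal{X}_i$ with constant $L_i$, the quantization estimate $\Vert f_i(\cdot)-[f_i(\cdot)]_{\eta(i)}\Vert\le\eta(i)$, and inequality (A1)(ii) applied to the two $N$-tuples above, one obtains
\[
V_i(x^{+}_i,q^{+}_i)\le V_i(x_i,q_i)-\rho_i\bigl(V_i(x_i,q_i)\bigr)+\!\!\!\sum_{j\in[1;N],\,j\ne i}\!\!\!\sigma_{i,j}(\eta(j))+\sigma_i(\eta(i))+L_i\,\eta(i).
\]
By (\ref{statem}) the sum of the last three summands is at most $(\rho_i\circ\underline{\alpha}_i)(\varepsilon_i)$, and a standard comparison argument for $\mathcal{K}_{\infty}$ functions then gives $V_i(x^{+}_i,q^{+}_i)\le\underline{\alpha}_i(\varepsilon_i)$, i.e.\ $(x^{+}_i,q^{+}_i)\in R_i$. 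The direction $S^{\eta}(\Sigma_i)\rightarrow S(\Sigma_i)$ is analogous, and simpler: a transition of $S^{\eta}(\Sigma_i)$ from $q_i$ under a label $(q_1,\dots,q_{i-1},q_{i+1},\dots,q_N,v_i)$, whose entries already lie in $\mathcal{X}_j$ and $\mathcal{U}_i$, is matched by the transition of $S(\Sigma_i)$ from $x_i$ under the \emph{same} label; now all terms $\sigma_{i,j}(\cdot)$ and $\sigma_i(\cdot)$ vanish, and one is left with $V_i(x^{+}_i,q^{+}_i)\le V_i(x_i,q_i)-\rho_i(V_i(x_i,q_i))+L_i\,\eta(i)\le\underline{\alpha}_i(\varepsilon_i)$, once more by (\ref{statem}). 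Therefore $R_i$ (and $R^{-1}_i$) is an $\varepsilon_i$--approximate bisimulation relation, and $S(\Sigma_i)$ and $S^{\eta}(\Sigma_i)$ are approximately bisimilar with precision $\varepsilon_i$.

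The main obstacle, and the only genuinely technical step, is the comparison--function bookkeeping just invoked: one must show that $V_i(x_i,q_i)\le\underline{\alpha}_i(\varepsilon_i)$ together with a one--step estimate $V^{+}_i\le V_i(x_i,q_i)-\rho_i(V_i(x_i,q_i))+\delta$ with $\delta\le(\rho_i\circ\underline{\alpha}_i)(\varepsilon_i)$ forces $V^{+}_i\le\underline{\alpha}_i(\varepsilon_i)$; this rests on a monotonicity property of the map $s\mapsto s-\rho_i(s)$ that is standard in the $\delta$--ISS/ISS framework. Everything else --- the output estimate, the covering property, and the handling of the two $N$-tuples in the transition matching --- is routine once the relation $R_i$ above has been identified.
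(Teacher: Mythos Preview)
Your proof is correct and follows essentially the same route as the paper's: the same relation $R_i=\{(x_i,q_i):V_i(x_i,q_i)\le\underline{\alpha}_i(\varepsilon_i)\}$, the same Lipschitz-plus-(A1)(ii) chain to bound $V_i(x^{+}_i,q^{+}_i)$, and the same use of (\ref{statem}) and (\ref{statem1}) for the transition and covering conditions. Your handling of condition (iii) via the \emph{same} label is slightly more explicit than the paper's ``similar arguments,'' and you rightly flag the implicit monotonicity of $s\mapsto s-\rho_i(s)$ that the paper also uses without comment.
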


The proof can be given along the lines of the proof of Theorem 5.1 in \cite{PolaAutom2008}. 
We include it here for the sake of completeness. 

\begin{proof} 
Consider the relation $\mathcal{R}_i\subseteq X^{*}_i\times X^{\eta}_i$ defined by $(x_i,x'_i)\in\mathcal{R}_i$ if and only if $V_i(x_i,x'_i) \leq \underline{\alpha}_i(\varepsilon_i)$ and consider any pair $(x_i,x'_i)\in\mathcal{R}$. We first note that $\Vert x_i-x'_i \Vert \leq \underline{\alpha}_i^{-1}(V_i(x_i,x'_i)) \leq \varepsilon_i$ from which, condition (i) of Definition \ref{ASR} holds. We now show that also condition (ii) holds. Consider any $(x_1,...,x_{i-1},x_{i+1},...,x_N,u_i)\in W^{*}_i \times U^{*}_i$ and the transition $x_i \rTo^{x_1,...,x_{i-1},x_{i+1},...,x_N,u_i}_{*,i} x^{+}_i$ in system $S(\Sigma_i)$. Consider a control label $(x'_1,...,x'_{i-1},x'_{i+1},$ $...,x'_N,u'_i)\in W^{\eta}_i \times U^{\eta}_i$ such that $\Vert x_j-x'_j\Vert \leq\eta(j)$ for any $j\in [1;N]$, $j \neq i$ and $\Vert u_i-u'_i\Vert \leq\eta(i)$. 
Set $z_i=f_i(x'_1,x'_2,...,x'_N,u'_i)$ and $x'^{+}_i=[z_i]_{\eta(i)}$, and consider the transition $x'_i \rTo_{\eta,i}^{x'_1,...,x'_{i-1},x'_{i+1},...,x'_N,u'_i} x'^{+}_i$ in system $S^{\eta}(\Sigma_i)$. We get 
$
V_i(x^{+}_i,x'^{+}_i) \leq V_i(x^{+}_i,z_i) + L_i\, \Vert x'^{+}_i -z_i \Vert \leq 
V_i(x_i,x'_i) - \rho_i(V_i(x_i,x'_i)) + \sum_{j\in [1;N],j\neq i} \sigma_{i,j} (\Vert x_j - x'_j \Vert ) + \sigma_i(\Vert u_i - u'_i \Vert) 
+ L_i\, \eta(i)\leq
(\Id-\rho_i) \circ \underline{\alpha}_i(\varepsilon_i) + \sum_{j\in [1;N], j\neq i} \sigma_{i,j}(\eta(j)) + \sigma_i(\eta(i))+ L_i\, \eta(i)
\leq \underline{\alpha}_i(\varepsilon_i)
$. 
In particular, the first inequality holds by definition of $L_i$, the second inequality by the inequality (ii) in Assumption (A1), the third inequality by the definition of $x'^{+}_i$ and the last inequality by condition (\ref{statem}). Hence, condition (ii) in Definition \ref{ASR} holds. Condition (iii) in Definition \ref{ASR} can be shown by using similar arguments. Finally, for any $x_i\in X^{*}_i$ by choosing $x'_i=[x_i]_{\eta(i)}\in X^{\eta}_i$ we get $V_i(x_i,x'_i) \leq \overline{\alpha}_i(\Vert x_i - x'_i\Vert) \leq \overline{\alpha}_i(\eta(i)) \leq \underline{\alpha}_i(\varepsilon_i)$. In particular, the first inequality in the above chain holds by the inequality (i) in the statement and the last one by condition (\ref{statem1}). Hence, $\mathcal{R}_i(X^{*}_i)=X^{\eta}_i$. Conversely, for any $x'_i \in X^{\eta}_i$ by picking $x_i=x'_i$ one gets $V_i(x_i,x'_i)=0\leq \underline{\alpha}_i(\varepsilon_i)$ from which, $\mathcal{R}_i^{-1}(X^{\eta}_i)=X^{*}_i$, which concludes the proof.  
\end{proof}

\subsection{Symbolic models for interconnected subsystems $\Sigma_{\Scc_k}$} \label{subsec2}
As in the previous section, we start by providing a representation of each subsystem $\Sigma_{\Scc_k}$ in terms of the system $S(\Sigma_{\Scc_k})=(X^{*}_{\Scc_k},W^{*}_{\Scc_k} \times U^{*}_{\Scc_k},\rTo_{*,\Scc_k},Y^{*}_{\Scc_k},H^{*}_{\Scc_k})$ where $X^{*}_{\Scc_k}=\Xi_k$, 
$W^{*}_{\Scc_k}=\Xi_1 \times \Xi_2 \times ... \times \Xi_{k-1} \times \Xi_{k+1} \times ... \times \Xi_{\overline{N}}$, $U^{*}_{\Scc_k}=\Omega_k$, $\xi_k \rTo_{*,\Scc_k}^{(\xi_1,...,\xi_{k-1},\xi_{k+1},...,\xi_{\overline{N}},\omega_k)} \xi^{+}_k$ if $\xi^{+}_k=\varphi_k(\xi_1,\xi_2,...,\xi_{\overline{N}},\omega_k)$, $Y^{*}_{\Scc_k}=\Xi_k$ and $H^{*}_{\Scc_k}(\xi_k)=\xi_k$. 
System $S(\Sigma_{\Scc_k})$ is metric when we regard $Y^{*}_{\Scc_k}=\Xi_k$ as being equipped with the metric $\mathbf{d}(\xi_k,\xi'_k)=\max_{i\in \mathcal{V}_k}\Vert x_i-x'_i\Vert$ for any $\xi_k:=(x_{i(1,k)},x_{i(2,k)},...,x_{i(\overline{N}_k,k)}),\xi'_k:=(x'_{i(1,k)},x'_{i(2,k)},...,x'_{i(\overline{N}_k,k)}) \in \Xi_k$.
In the sequel we consider the following technical assumption that has been used in \cite{SmallGainTh2} to prove the small gain theorem for ISS continuous--time control systems:\\
(A2) 
There exist $\mathcal{K}_{\infty}$ functions $g_i^k$, reals $a_i^k\in \mathbb{R}^+$ and $c^k_{ij} \in \mathbb{R}_0^+$, $i,j\in \mathcal{V}_k$, $j\ne i$, such that 
$\rho_{i}(s)\geq a_{i}^k g_{i}^k(s)$ and $\sigma_{i,j} \circ \underline{\alpha}^{-1}_j (s)\leq c_{ij}^k g_{j}^k(s)$, for any $i,j\in \mathcal{V}_k$, $j\ne i$. \\
The above assumption is standard in the literature concerning the stability of network of control systems studied by means of small gain arguments (see, for instance, \cite{SmallGainTh2} for the case of ordinary differential equations). In our discrete--time case, such assumption holds, for instance, if functions $f_i$ with $i\in \mathcal{V}_k$ are globally Lipschitz, and Assumption (A1) holds with $V_i(x_i,x_i')=\Vert x_i-x_i'\Vert$ for any $i\in \mathcal{V}_k$. 
This reasoning is applied in Section \ref{Sec:example} to an academic example.\\
For later use, define $V^{\vecc}_{k}(\xi_k,\xi'_k)=(V_{i(1,k)}(x_{i(1,k)},x'_{i(1,k)}),V_{i(2,k)}(x_{i(2,k)},x'_{i(2,k)}),$ $...,V_{i(\overline{N}_k,k)}(x_{i(\overline{N}_k,k)},$ $x'_{i(\overline{N}_k,k)}))$ where $\xi_k=(x_{i(1,k)},x_{i(2,k)},...,$ $x_{i(\overline{N}_k,k)})$ and $\xi'_k=(x'_{i(1,k)},x'_{i(2,k)},...,$ $x'_{i(\overline{N}_k,k)})$, $A_{k}=\diag(a_{i(1,k)}^k,a_{i(2,k)}^k,...,a_{i(\overline{N}_k,k)}^k)$, and $g^k(s)=(g_{i(1,k)}^k(s_1),g_{i(2,k)}^k(s_2),...,g_{i(\overline{N}_k,k)}^k (s_{\overline{N}_k}))$, for any $s=(s_{1},s_{2},...,s_{\overline{N}_k})\in \mathbb{R}^{+}_{\overline{n}_k}$. Moreover, define matrix $C_k$ such that entries in the diagonal are $0$ and the entry of row $j$ and column $j'$ with $j\neq j'$ is given by $c^k_{i(j,k)i(j',k)}$, for all $j,j'\in [1;\overline{N}_k]$. We can now give the following result. 

\begin{theorem}
\label{main2}
Let us consider the subsystem $\Sigma_{\Scc_k}$.
If Assumptions (A1) and (A2) and the inequality $r(A_k^{-1}C_k)<1$ hold, 
then, for any vector $\underline \lambda_k=(\lambda_{i(1,k)},\lambda_{i(2,k)},...,\lambda_{i(\overline{N_k},k)}) \in \mathbb{R}^{+}_{\overline{N}_k}$ satisfying $\underline \lambda_k^T(A_k-C_k)>0$, function $\overline{V}_k(\xi_k,\xi'_k)=\underline{\lambda}_k^{T}V^{\vecc}_k(\xi_k,\xi'_k)$, $\xi_k,\xi'_k\in\mathbb{R}^{\overline{n}_k}$ is a $\delta$--ISS Lyapunov function for $\Sigma_{\Scc_k}$, i.e. it satisfies the following inequalities, 

\begin{itemize}
\item[(i)] $\underline{\alpha}^k(\left\Vert \xi_k-\xi'_k \right\Vert )\leq \overline{V}_k(\xi_k,\xi'_k)\leq \overline{\alpha}^k(\left\Vert \xi_k - \xi'_k \right\Vert )$, for any $\xi_k,\xi'_k\in\mathbb{R}^{\overline{n}_k}$; 
\item[(ii)] $\overline{V}_k(\varphi_k(\xi_1,\xi_2,...,\xi_{\overline{N}},\omega_{k}),\varphi_k(\xi'_1,\xi'_2,...,\xi'_{\overline{N}},\omega'_{k})) - \overline{V}_k(\xi_k,\xi'_k) \leq -\rho^k ( \overline{V}_k(\xi_k,\xi'_k) ) + \\
\sum_{j\in [1;\overline{N}],j \neq k} \sigma_{j}^k (\Vert \xi_j - \xi'_j \Vert) + \sigma^k (\left\Vert \omega_{k} - \omega_{k}' \right\Vert )$, for any $\xi_k,\xi'_k\in\mathbb{R}^{\overline{n}_k}$ and any $\omega_{k},\omega_{k}'\in \mathbb{R}^{\overline{m}_{k}}$,
\end{itemize}
for some $\mathcal{K}_{\infty}$ functions $\underline{\alpha}^k$, $\overline{\alpha}^k$, $\rho^k$ and $\mathcal{K}$ functions $\sigma^{k}$, $\sigma_{j}^k$ ($j\in [1;\overline{N}], j\neq k$). 
Moreover, let $L^k$ be a Lipschitz constant of function $\overline{V}_k$ in $\Xi_k \times \Xi_k$.
For any desired precision $\varepsilon^k\in\mathbb{R}^{+}$, select vector $\overline{\eta} \in\mathbb{R}^{+}_{\overline{N}}$ satisfying the 
following inequalities:
\begin{eqnarray}
& & L^k\, \overline{\eta}(k) + \sum_{j\in [1;\overline{N}], j \neq k} \sigma_{j}^{k} ( \overline{\eta}(j) ) + \sigma^k(\overline{\eta}(k)) \leq (\rho^k \circ \underline{\alpha}^k)(\varepsilon^k),\label{Sstatem}\\
& & \overline{\alpha}^k(\overline{\eta}(k))\leq \underline{\alpha}^k(\varepsilon^k). \label{Sstatem1}
\end{eqnarray}

\noindent
Define vector $\eta\in\mathbb{R}^{+}_N$ by $\eta(i)=\overline{\eta}(k)$ for all $i \in \mathcal{V}_k$ and $k\in [1;\overline{N}]$. Then, the composition\footnote{The definition of the composition operator $\mathcal{S}(.)$ is reported in the Appendix.} $\mathcal{S}(\{S^{\eta}(\Sigma_i)\}_{i\in \mathcal{V}_k})$ of the symbolic models $S^{\eta}(\Sigma_i)$ associated with each control system $\Sigma_i$ ($i\in \mathcal{V}_k$), is approximately bisimilar with precision $\varepsilon^k$ to system $S(\Sigma_{\Scc_k})$.
\end{theorem}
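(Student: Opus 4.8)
The plan is to prove Theorem~\ref{main2} in two logically distinct parts. First I would establish that $\overline{V}_k$ is a $\delta$--ISS Lyapunov function for the interconnected system $\Sigma_{\Scc_k}$, i.e. that it satisfies properties (i) and (ii) in the statement; this is a pure small--gain argument on the vector Lyapunov function $V^{\vecc}_k$ and does not involve symbolic models at all. Second, once (i) and (ii) are in hand together with the Lipschitz constant $L^k$ and the quantization bounds \eqref{Sstatem}--\eqref{Sstatem1}, I would observe that the pair $(S(\Sigma_{\Scc_k}), \mathcal{S}(\{S^{\eta}(\Sigma_i)\}_{i\in\mathcal{V}_k}))$ satisfies exactly the hypotheses of Proposition~\ref{Th1} applied at the level of the aggregate subsystem $\Sigma_{\Scc_k}$, provided one first checks that $\mathcal{S}(\{S^{\eta}(\Sigma_i)\}_{i\in\mathcal{V}_k})$ coincides with (or is isomorphic to) the natural symbolic model $S^{\overline\eta}(\Sigma_{\Scc_k})$ obtained by quantizing $\Sigma_{\Scc_k}$ directly with parameter $\overline\eta(k)$ on each coordinate block.

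For the first part, I would argue as follows. From inequality (i) of Assumption (A1) applied componentwise, $\underline{\alpha}^k(\Vert\xi_k-\xi'_k\Vert):=\min_{i\in\mathcal{V}_k}\underline{\alpha}_i(\cdot/\overline{N}_k)$-type bounds give the lower and upper $\mathcal{K}_\infty$ bounds on $\overline V_k=\underline\lambda_k^TV^{\vecc}_k$ (here I would use that $\underline\lambda_k$ has strictly positive entries, which follows from $\underline\lambda_k^T(A_k-C_k)>0$ together with $A_k$ diagonal positive and $C_k$ nonnegative, or I would simply take $\underline\lambda_k>0$ as may be arranged from the Perron--Frobenius/$r(A_k^{-1}C_k)<1$ hypothesis). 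For property (ii), summing inequality (ii) of (A1) over $i\in\mathcal{V}_k$ with weights $\lambda_{i(\cdot,k)}$ and then splitting the neighbour terms $\sigma_{i,j}(\Vert x_j-x'_j\Vert)$ into those with $j\in\mathcal{V}_k$ (internal) and those with $j\notin\mathcal{V}_k$ (external, which become part of $\sum_{j\neq k}\sigma^k_j$ and $\sigma^k$), I would use Assumption (A2): $\sigma_{i,j}\circ\underline{\alpha}_j^{-1}(V_j)\le c^k_{ij}g^k_j(V_j)$ and $\rho_i(V_i)\ge a^k_ig^k_i(V_i)$. This turns the internal part of the increment into $-\underline\lambda_k^TA_kg^k(V^{\vecc}_k)+\underline\lambda_k^TC_kg^k(V^{\vecc}_k) = -\underline\lambda_k^T(A_k-C_k)g^k(V^{\vecc}_k)$, which is $\le 0$ and in fact dominates a $\mathcal{K}_\infty$ function $\rho^k$ of $\overline V_k$ because all entries of $\underline\lambda_k^T(A_k-C_k)$ are strictly positive and each $g^k_i$ is $\mathcal{K}_\infty$; a standard comparison-function lemma (e.g. the construction used to pass from a vector dissipation inequality to a scalar one) produces the single $\mathcal{K}_\infty$ function $\rho^k$. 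The external neighbour terms and the input terms $\sigma_i(\Vert u_i-u'_i\Vert)$ are repackaged, using $\Vert\xi_j-\xi'_j\Vert\ge\Vert x_\ell-x'_\ell\Vert$ for $\ell\in\mathcal{V}_j$ and monotonicity of $\mathcal{K}$ functions, into the required $\sigma^k_j$ and $\sigma^k$.

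For the second part I would verify that $\mathcal{S}(\{S^{\eta}(\Sigma_i)\}_{i\in\mathcal{V}_k})$ is exactly $S^{\overline\eta}(\Sigma_{\Scc_k})$ in the sense of the Definition preceding Proposition~\ref{Th1}: a state is a tuple $(x_i)_{i\in\mathcal{V}_k}\in\times_{i\in\mathcal{V}_k}[\mathcal{X}_i]_{\eta(i)}=[\Xi_k]_{\overline\eta(k)}$ (using the product structure of the grid and $\eta(i)=\overline\eta(k)$ for $i\in\mathcal{V}_k$), an internal/external label is a compatible tuple of grid points, and the transition relation of the composition forces $x_i^+=[f_i(x,u_i)]_{\eta(i)}$ componentwise, which is precisely $\xi_k^+=[\varphi_k(\xi_1,\dots,\xi_{\overline N},\omega_k)]_{\overline\eta(k)}$; the output map and metric match by the $\max$-metric chosen for $Y^*_{\Scc_k}$. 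Having done this identification, Theorem~\ref{main2} follows by invoking Proposition~\ref{Th1} verbatim with $\Sigma_i\leadsto\Sigma_{\Scc_k}$, $V_i\leadsto\overline V_k$, $L_i\leadsto L^k$, $\varepsilon_i\leadsto\varepsilon^k$, $\eta\leadsto\overline\eta$ and the neighbour index $j$ ranging over $[1;\overline N]\setminus\{k\}$: conditions \eqref{statem}--\eqref{statem1} become exactly \eqref{Sstatem}--\eqref{Sstatem1}, and the same relation $\overline{\mathcal{R}}_k=\{(\xi_k,\xi'_k):\overline V_k(\xi_k,\xi'_k)\le\underline{\alpha}^k(\varepsilon^k)\}$ is an $\varepsilon^k$-approximate bisimulation.

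The main obstacle I anticipate is the first part's comparison-function bookkeeping: extracting a single $\mathcal{K}_\infty$ function $\rho^k$ with $\rho^k(\overline V_k(\xi_k,\xi'_k))\le\underline\lambda_k^T(A_k-C_k)g^k(V^{\vecc}_k(\xi_k,\xi'_k))$ uniformly, and dually a single $\overline\alpha^k,\underline\alpha^k$, from the componentwise $\mathcal{K}_\infty$ data $\underline\alpha_i,\overline\alpha_i,g^k_i$ and the strictly positive weight vector. This requires care because $\overline V_k$ is a weighted $\ell^1$-combination while the dissipation rate is a weighted combination of the $g^k_i(V_i)$'s; the standard trick is to bound $\overline V_k$ both above and below by scalar $\mathcal{K}_\infty$ functions of $\min_i V_i$ and $\max_i V_i$ and then compose, but making the estimates tight enough that (A2) with $r(A_k^{-1}C_k)<1$ actually yields a valid $\rho^k$ is the delicate point. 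Everything after that — the symbolic-model identification and the appeal to Proposition~\ref{Th1} — is essentially mechanical.
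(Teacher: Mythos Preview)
Your plan matches the paper's proof essentially step for step: the paper also (1) sums inequality~(ii) of Assumption~(A1) over $i\in\mathcal{V}_k$ with weights $\lambda_i$, splits the neighbour terms into internal ($j\in\mathcal{V}_k$) and external ($j\notin\mathcal{V}_k$) parts, applies Assumption~(A2) to rewrite the internal part as $\underline{\lambda}_k^{T}(-A_k+C_k)g^k(V^{\vecc}_k(\xi_k,\xi'_k))$, and then defines the comparison functions $\underline{\alpha}^k,\overline{\alpha}^k,\rho^k,\sigma^k,\sigma^k_j$ by constrained min/max constructions; and (2) introduces the symbolic system $S^{\overline{\eta}}(\Sigma_{\Scc_k})$ obtained by quantizing $\Sigma_{\Scc_k}$ directly, observes that it equals $\mathcal{S}(\{S^{\eta}(\Sigma_i)\}_{i\in\mathcal{V}_k})$, and invokes Proposition~\ref{Th1} verbatim. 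The only point worth noting is that the paper is just as terse as you feared on the ``main obstacle'' you identify---it simply defines $\rho^k(s)=\min\{-\underline{\lambda}_k^{T}(-A_k+C_k)g^k(V^{\vecc}_k(\xi_k,\xi'_k))\,:\,\underline{\lambda}_k^{T}V^{\vecc}_k(\xi_k,\xi'_k)=s\}$ and asserts $\rho^k\in\mathcal{K}_\infty$ without further argument, so your caution about the comparison-function bookkeeping is warranted but not something the paper itself resolves in more detail.
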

\begin{proof}
The first part of the proof follows the proof of Theorem 4.7 in \cite{SmallGainTh2}. 
By Lemma 3.1 in \cite{SmallGainTh2} if $r(A_k^{-1}C_k)<1$ there exists a vector $\underline{\lambda}_k=(\lambda_{i(1,k)},\lambda_{i(2,k)},...,$ $\lambda_{i(\overline{N_k},k)}) \in \mathbb{R}^{+}_{\overline{N}_k}$ such that $\underline{\lambda}_k^{T}(-A_k+C_k)<0$. 
By defining $\underline{\alpha}^{k}(s)=\min_{\Vert(s_{i(1,k)},s_{i(2,k)},...,s_{i(\overline{N}_k,k)})\Vert=s}$ $\sum_{i\in \mathcal{V}_k} \lambda_i \underline{\alpha}_{i}(s_{i})$ and $\overline{\alpha}^{k}(s)=\max_{\Vert(s_{i(1,k)},s_{i(2,k)},...,s_{i(\overline{N}_k,k)})\Vert=s}$ $\sum_{i\in \mathcal{V}_k} \lambda_i \overline{\alpha}_{i}(s_{i})$  ($s_i\in\mathbb{R}^{+}_{0}$), the inequality (i) in the statement holds. We now show inequality (ii). 
Consider any 
$\xi_j:=(x_{i(1,j)},x_{i(2,j)},...,$ $x_{i(\overline{N}_j,j)}),\xi'_j:=(x'_{i(1,j)},x'_{i(2,j)},...,x'_{i(\overline{N}_j,j)})\in \mathbb{R}^{\overline{n}_j}$ and
$\omega_j:=(u_{i(1,j)},u_{i(2,j)},...,u_{i(\overline{N}_j,j)})$, $\omega'_j:=(u'_{i(1,j)},u'_{i(2,j)},...,$ $u'_{i(\overline{N}_j,j)})
\in \mathbb{R}^{\overline{m}_j}$. 
Under Assumptions (A1) and (A2) the following equalities/inequalities hold:\\
$
\overline{V}_k(\varphi_k(\xi_1,\xi_2,...,\xi_{\overline{N}},\omega_{k}),\varphi_k(\xi'_1,\xi'_2,...,\xi'_{\overline{N}},\omega'_{k})) - \overline{V}_k(\xi_k,\xi'_k)= \\
\sum_{i\in \mathcal{V}_k}\lambda_i ( V_i(f_i(x_1,x_2,...,x_N,u_i),f_i(
x'_1,x'_2,...,x'_N,u'_i))- V_i(x_i,x'_i))  \leq \\
\sum_{i\in \mathcal{V}_k}\lambda_i ( 
-\rho_i ( V_i(x_i,x'_i) ) + \sum_{j\in [1;N],j\neq i} \sigma_{i,j} (\Vert x_j - x'_j \Vert )) + \sigma_i (\left\Vert u_{i}-u_{i}' \right\Vert )
) = \\ 
\sum_{i\in \mathcal{V}_k}\lambda_i ( 
-\rho_i ( V_i(x_i,x'_i) ) + \sum_{j\in \mathcal{V}_k,j\neq i} \sigma_{i,j} (\Vert x_j - x'_j \Vert ) + \\
\quad \sum_{j\in [1;N] \backslash \mathcal{V}_k} \sigma_{i,j} (\Vert x_j - x'_j \Vert ))
 + \sum_{i\in \mathcal{V}_k} \lambda_i \sigma_i (\left\Vert u_{i}-u_{i}' \right\Vert )
) \leq \\
\sum_{i\in \mathcal{V}_k}\lambda_i ( 
-\rho_i ( V_i(x_i,x'_i) ) + \sum_{j\in \mathcal{V}_k,j\neq i} \sigma_{i,j} \circ \underline{\alpha}^{-1}_j (V_j(x_j,x'_j)) + \\
\quad \sum_{j\in [1;N] \backslash \mathcal{V}_k} \sigma_{i,j} (\Vert x_j - x'_j \Vert ))
 + \sum_{i\in \mathcal{V}_k} \lambda_i \sigma_i (\left\Vert u_{i}-u_{i}' \right\Vert )
) \leq \\
\sum_{i\in \mathcal{V}_k} \lambda_i ( -a_i^k g_i^k( V_i(x_i,x'_i) ) + \sum_{j\in \mathcal{V}_k,j\neq i} c_{ij}^k g_j^k( V_j(x_j,x'_j) )) + \\
\sum_{i\in \mathcal{V}_k} \lambda_i (\sum_{j\in [1;N] \backslash \mathcal{V}_k} \sigma_{i,j} (\Vert x_j - x'_j \Vert ) )+
\sum_{i\in \mathcal{V}_k} \lambda_i\sigma_i (\Vert u_{i}-u_{i}' \Vert ) ) =\\
\underline{\lambda}_k^{T} (-A_k+C_k) g^k(V^{\vecc}_k(\xi_k,\xi'_k))+
\sum_{i\in \mathcal{V}_k}
\sum_{j\in [1;\overline{N}], j\neq k} 
\sum_{j'\in \mathcal{V}_j}
\lambda_i \sigma_{i,j'} (\Vert x_{j'} - x'_{j'} \Vert ) +
\sum_{i\in \mathcal{V}_k}\lambda_i \sigma_i (\left\Vert u_{i}-u_{i}' \right\Vert ) =
\underline{\lambda}_k^{T} (-A_k+C_k) g^k(V^{\vecc}_k(\xi_k,\xi'_k))+
\sum_{j\in [1;\overline{N}], j\neq k} 
\left(
\sum_{i\in \mathcal{V}_k}
\sum_{j'\in \mathcal{V}_j}
\lambda_i \sigma_{i,j'} (\Vert x_{j'} - x'_{j'} \Vert ) 
\right)+
\sum_{i\in \mathcal{V}_k}\lambda_i \sigma_i (\left\Vert u_{i}-u_{i}' \right\Vert ) .\\
$
By defining 
$\rho^k (s)=\min\{-\underline{\lambda}_k^{T}(-A_k+C_k)g^k(V^{\vecc}_k (\xi_k,\xi'_k))|$ $\underline{\lambda}_k^{T}V^{\vecc}_k(\xi_k,\xi'_k)=s\}$ ($s\in\mathbb{R}^{+}_{0}$), 
$\sigma^k_{j}(s)=\max_{\Vert(s_1,s_2,...,s_{\overline{N}_j})\Vert=s}$ 
$\sum_{i\in \mathcal{V}_k}$ 
$\sum_{j'\in \mathcal{V}_j , j\neq k} \lambda_i \sigma_{i,j'} (s_{j'})$ ($s_{j'}\in\mathbb{R}^{+}_{0}$),  
$\sigma^k(s)=$ 
$\max_{\Vert(s_1,s_2,...,s_{\overline{N}_k})\Vert=s}$ $\sum_{i\in \mathcal{V}_k} \lambda_i \sigma_{i}(s_i)$ ($s_i\in\mathbb{R}^{+}_{0}$), 
one gets 
$\overline{V}_k(\varphi_k(\xi_1,\xi_2,...,\xi_{\overline{N}},\omega_{k}),\varphi_k(\xi'_1,\xi'_2,$ $...,\xi'_{\overline{N}},\omega'_{k})) - \overline{V}_k(\xi_k,\xi'_k) \leq -\rho^k ( \overline{V}_k(\xi_k,\xi'_k) ) + 
\sum_{j\in [1;\overline{N}],j\neq k} \sigma_{j}^k (\Vert \xi_j - \xi'_j \Vert) + \sigma^k (\left\Vert \omega_{k} - \omega_{k}' \right\Vert )$.
Since $\sigma^k$ and $\sigma^k_{j}$ are $\mathcal{K}$ and $\rho_k$ is $\mathcal{K}_{\infty}$, the inequality (ii) in the statement holds and hence, 
$\overline{V}_k$ is a $\delta$--ISS Lyapunov function for $\Sigma_{\Scc_k}$. We now show the second part of the statement. To this purpose define 
the system $S^{\overline{\eta}}(\Sigma_{\Scc_k})=(X^{\overline{\eta}}_{\Scc_k},W^{\overline{\eta}}_{\Scc_k}\times U^{\overline{\eta}}_{\Scc_k},
\rTo_{\overline{\eta},\Scc_k},X^{\overline{\eta}}_{\Scc_k},H^{\overline{\eta}}_{\Scc_k})$ where $X^{\overline{\eta}}_{\Scc_k}=[\Xi_k]_{\overline{\eta}(k)}$, 
$W^{\overline{\eta}}_{\Scc_k}
=[\Xi_1]_{\overline{\eta}(1)} \times [\Xi_2]_{\overline{\eta}(2)} \times ... \times [\Xi_{k-1}]_{\overline{\eta}(k-1)} \times [\Xi_{k+1}]_{\overline{\eta}(k+1)} \times ... \times [\Xi_{\overline{N}}]_{\overline{\eta}(\overline{N})}$, $U^{\overline{\eta}}_{\Scc_k}=[\Omega_k]_{\overline{\eta}(k)}$, 
$\xi_k \rTo ^{\xi_1,\xi_2,...,\xi_{k-1},\xi_{k+1},..., \xi_{\overline{N}},\omega_k}_{\overline{\eta},\Scc_k} \xi_{k}^{+}$ if 
$\xi^{+}_k = [\varphi_{k} (\xi_1,\xi_2,..., \xi_{\overline{N}},\omega_k)]_{\overline{\eta}(k)}$, $Y^{\overline{\eta}}_{\Scc_k}=X^{\overline{\eta}}_{\Scc_k}$, and $H^{\overline{\eta}}_{\Scc_k}(\xi_k)=\xi_k$. By using the same arguments as in Proposition \ref{Th1}, for any $\overline{\eta} \in\mathbb{R}^{+}_{\overline{N}}$ satisfying the inequalities in (\ref{Sstatem}) and (\ref{Sstatem1}), we get $S(\Sigma_{\Scc_k}) \cong_{\varepsilon^k} S^{\overline{\eta}}(\Sigma_{\Scc_k})$. Finally, since $S^{\overline{\eta}}(\Sigma_{\Scc_k})= \mathcal{S}(\{S^{\eta}(\Sigma_i)\}_{i\in \mathcal{V}_k})$, the second part of the statement is proven. 
\end{proof}

\incmargin{1em}
\restylealgo{boxed}\linesnumbered
\begin{algorithm*}
\label{alg}
\SetLine
\caption{Compositional design of quantization parameters.}
\textbf{select the desired precision} $\varepsilon\in\mathbb{R}^{+}$\;
\textbf{set} $\overline{\eta}(k):=\infty$, $\eta^{\ast}_k:=\infty$, $\varepsilon^k:=0$, $\forall k\in [1;\overline{N}]$; $\SCC_{\temp} := \SCC(\mathcal{G})$\;
\While{$\SCC_{\temp} \neq \varnothing$}
{
\ForEach {$\Scc_k \in \Leaves(\SCC_{\temp})$}
{
\If{$\overline{\eta}(k)=\infty$}
{
\eIf{$\SCC_{\temp} = \SCC(\mathcal{G})$}
{
$\varepsilon^k:=\varepsilon$\;
}
{$\varepsilon^k:=\min \{ \overline{\eta}(j), \Scc_j \in \Post(\Scc_k) \}$\;
}
\textbf{select} $\overline{\eta}(k)\in\mathbb{R}^{+}$ and $\eta^{\ast}_j \in\mathbb{R}^{+} ,\forall \Scc_j \in \Post^{-1}(\{\Scc_k\})$ such that:\\
$L^k\,\overline{\eta}(k) + \sigma^k(\overline{\eta}(k)) + \sum_{\Scc_j \in 
\Post^{-1}(\{\Scc_k\})}\sigma^k_j(\eta^{\ast}_j) \leq (\rho^k \circ \underline{\alpha}^k)(\varepsilon^k)$; \\
$\overline{\alpha}^k(\overline{\eta}(k))\leq \underline{\alpha}^k(\varepsilon^k)$\\
\textbf{set} $\overline{\eta}(j) :=\min\{\eta^{\ast}_j,\overline{\eta}(j)\}, \forall \Scc_j \in \Post^{-1}(\{\Scc_k\})$\;
}
}
$\SCC_{\temp}:=\SCC_{\temp} \backslash \Leaves(\SCC_{\temp})$\;
}
\end{algorithm*}
\decmargin{1em}

\subsection{Symbolic models for the network of control systems $\Sigma$} \label{subsec3}

When more than one strongly connected component is associated with $\Sigma$, the following results can be applied.
As in the previous section, we first provide a representation of $\Sigma$ in terms of the system $S(\Sigma)=(X^{*},U^{*},\rTo_{*},Y^{*},H^{*})$ where $X^{*}=\mathcal{X}$, $U^{*}=\mathcal{U}$, $x \rTo_{*}^{u} x^+$ if $x^{+}=f(x,u)$, $Y^{*}=\mathcal{X}$ and $H^{*}(x)=x$. 
System $S(\Sigma)$ is metric when we regard $Y^{*}=\mathcal{X}$ as being equipped with the metric $\mathbf{d}(x,x')=\max_{i\in [1;N]} \Vert x_i-x'_i\Vert$ for any $x:=(x_1,x_2,...,x_N),x':=(x'_1,x'_2,...,x'_N) \in \mathcal{X}$.
Quantization parameters for the network of symbolic models are computed in Algorithm \ref{alg} that is explained in \textit{Step \#3} of the next section, through an academic example. 
It is easy to see that for any chosen precision $\varepsilon\in\mathbb{R}^{+}$, there always exists a vector $\overline{\eta}\in\mathbb{R}^{+}_{\overline{N}}$ of quantization parameters, satisfying conditions in Algorithm \ref{alg}. Moreover, since the number of strongly connected components of $\mathcal{G}$ is finite, Algorithm \ref{alg} terminates in a finite number of steps. We can now give the following result.

\begin{theorem}
\label{ThSCC}
Suppose that Assumption (A1) holds and Assumption (A2) and condition $r(A_k^{-1} C_k)<1$ hold for each $k\in [1;\overline{N}]$. For any desired precision $\varepsilon\in\mathbb{R}^{+}$, let $\overline{\eta}\in\mathbb{R}^{+}_{\overline{N}}$ be obtained as output of Algorithm \ref{alg}. 
Define vector $\eta\in\mathbb{R}^{+}_N$ by $\eta(j)=\overline{\eta}(k)$ for all $j \in \mathcal{V}_k$ and $k\in [1;\overline{N}]$. 
Then, the composition $\mathcal{S}(\{S^{\eta}(\Sigma_i)\}_{i\in [1;N]})$ of the symbolic models $S^{\eta}(\Sigma_i)$ associated with each subsystem $\Sigma_i$ is approximately bisimilar to $S(\Sigma)$ with precision $\varepsilon$.
\end{theorem}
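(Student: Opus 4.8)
The plan is to reduce the statement to a composition over the acyclic graph obtained by contracting the strongly connected components of $\mathcal{G}$, and then to exhibit a product bisimulation relation assembled from the component relations supplied by Theorem~\ref{main2}, the quantitative bookkeeping being provided by Algorithm~\ref{alg}. First I would use associativity of the composition operator $\mathcal{S}(\cdot)$ and the identity $S^{\overline{\eta}}(\Sigma_{\Scc_k})=\mathcal{S}(\{S^{\eta}(\Sigma_i)\}_{i\in\mathcal{V}_k})$ (established in the proof of Theorem~\ref{main2}) to rewrite $\mathcal{S}(\{S^{\eta}(\Sigma_i)\}_{i\in[1;N]})=\mathcal{S}(\{S^{\overline{\eta}}(\Sigma_{\Scc_k})\}_{k\in[1;\overline{N}]})$ and $S(\Sigma)=\mathcal{S}(\{S(\Sigma_{\Scc_k})\}_{k\in[1;\overline{N}]})$, so that it suffices to prove $\mathcal{S}(\{S(\Sigma_{\Scc_k})\}_{k})\cong_{\varepsilon}\mathcal{S}(\{S^{\overline{\eta}}(\Sigma_{\Scc_k})\}_{k})$. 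Before constructing the relation I would check that the two inequalities imposed on $\overline{\eta}$ and $\varepsilon^k$ by Algorithm~\ref{alg} imply (\ref{Sstatem})--(\ref{Sstatem1}) for every $k$, using that $\sigma^k_j\equiv 0$ whenever $\Scc_j\notin\Post^{-1}(\{\Scc_k\})$, that $\overline{\eta}(j)\le\eta^{\ast}_j$ for $\Scc_j\in\Post^{-1}(\{\Scc_k\})$ (by the ``set'' step of the algorithm), and that the $\mathcal{K}$ functions $\sigma^k_j$ are nondecreasing. Consequently Theorem~\ref{main2} applies to each $\Sigma_{\Scc_k}$ and yields the $\delta$--ISS Lyapunov function $\overline{V}_k$ together with the relation $\mathcal{R}_k=\{(\xi_k,\xi'_k)\in\Xi_k\times[\Xi_k]_{\overline{\eta}(k)}:\overline{V}_k(\xi_k,\xi'_k)\le\underline{\alpha}^k(\varepsilon^k)\}$, which is an approximate bisimulation with precision $\varepsilon^k$ between $S(\Sigma_{\Scc_k})$ and $S^{\overline{\eta}}(\Sigma_{\Scc_k})$; in particular $\mathcal{R}_k(X^{*}_{\Scc_k})=X^{\overline{\eta}}_{\Scc_k}$ and $\mathcal{R}_k^{-1}(X^{\overline{\eta}}_{\Scc_k})=X^{*}_{\Scc_k}$.

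Next I would record two properties of the output of Algorithm~\ref{alg}, both by induction along the order in which the algorithm removes the strongly connected components (i.e.\ from the leaves of the $\DAG$ upwards): (a) $\varepsilon^k\le\varepsilon$ for every $k\in[1;\overline{N}]$; and (b) for every edge $\Scc_j\to\Scc_k$ of the $\DAG$ (that is, $\Scc_j\in\Post^{-1}(\{\Scc_k\})$) the precision $\varepsilon^j$ that the algorithm ultimately guarantees for $\Scc_j$ satisfies $\varepsilon^j\le\eta^{\ast}_j$, where $\eta^{\ast}_j$ is the value selected while $\Scc_k$ is being processed. Property (a) holds because $\varepsilon^k=\varepsilon$ at a leaf of $\mathcal{G}$, while at an interior node $\varepsilon^k=\min\{\overline{\eta}(j):\Scc_j\in\Post(\Scc_k)\}\le\overline{\eta}(j_0)\le\varepsilon^{j_0}\le\varepsilon$ for any $\Scc_{j_0}\in\Post(\Scc_k)$ --- the middle inequality since $\overline{\alpha}^{j_0}(\overline{\eta}(j_0))\le\underline{\alpha}^{j_0}(\varepsilon^{j_0})\le\overline{\alpha}^{j_0}(\varepsilon^{j_0})$, and the last by the induction hypothesis, $\Scc_{j_0}$ being removed before $\Scc_k$. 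Property (b) is exactly the coupling between $\varepsilon^k$, $\overline{\eta}(k)$ and the $\eta^{\ast}$'s built into the ``select'' step of the algorithm: the resolution at which the symbolic model of $\Scc_k$ is permitted to read the state of an upstream component $\Scc_j$ is never chosen finer than the precision the algorithm will subsequently guarantee for $\Scc_j$.

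Finally I would set $\mathcal{R}=\{(\xi,\xi'):(\xi_k,\xi'_k)\in\mathcal{R}_k\text{ for all }k\in[1;\overline{N}]\}$, with $\xi=(\xi_1,\ldots,\xi_{\overline{N}})$ and $\xi'=(\xi'_1,\ldots,\xi'_{\overline{N}})$, and check that $\mathcal{R}$ is an approximate bisimulation with precision $\varepsilon$ between $\mathcal{S}(\{S(\Sigma_{\Scc_k})\}_{k})$ and $\mathcal{S}(\{S^{\overline{\eta}}(\Sigma_{\Scc_k})\}_{k})$. Condition (i) of Definition~\ref{ASR} is immediate: by property (i) of each $\overline{V}_k$ and fact (a), $(\xi,\xi')\in\mathcal{R}$ forces $\|\xi_k-\xi'_k\|\le\varepsilon^k\le\varepsilon$ for all $k$. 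For condition (ii) I would match a transition $\xi^{+}_k=\varphi_k(\xi_1,\ldots,\xi_{\overline{N}},\omega_k)$ of $\mathcal{S}(\{S(\Sigma_{\Scc_k})\}_{k})$ under external input $\omega=(\omega_1,\ldots,\omega_{\overline{N}})$ by the transition $\xi'^{+}_k=[\varphi_k(\xi'_1,\ldots,\xi'_{\overline{N}},\omega'_k)]_{\overline{\eta}(k)}$ of $\mathcal{S}(\{S^{\overline{\eta}}(\Sigma_{\Scc_k})\}_{k})$ with $\omega'_k=[\omega_k]_{\overline{\eta}(k)}$, which is a legitimate transition precisely because $\xi'_j\in[\Xi_j]_{\overline{\eta}(j)}$ for every $j$, so that the quantized internal couplings are consistent. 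Componentwise I would then absorb the quantization error $\|\varphi_k(\xi'_1,\ldots,\xi'_{\overline{N}},\omega'_k)-\xi'^{+}_k\|\le\overline{\eta}(k)$ at a cost of at most $L^k\,\overline{\eta}(k)$ in $\overline{V}_k$, invoke property (ii) of $\overline{V}_k$ together with $\overline{V}_k(\xi_k,\xi'_k)\le\underline{\alpha}^k(\varepsilon^k)$ and monotonicity of $\Id-\rho^k$, bound the cross-$\Scc$ terms via $\|\xi_j-\xi'_j\|\le\varepsilon^j$ and fact (b) so that $\sigma^k_j(\|\xi_j-\xi'_j\|)\le\sigma^k_j(\eta^{\ast}_j)$, bound $\sigma^k(\|\omega_k-\omega'_k\|)\le\sigma^k(\overline{\eta}(k))$, and conclude from the first inequality of Algorithm~\ref{alg} that $\overline{V}_k(\xi^{+}_k,\xi'^{+}_k)\le\underline{\alpha}^k(\varepsilon^k)$, i.e.\ $(\xi^{+},\xi'^{+})\in\mathcal{R}$. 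Condition (iii) is obtained symmetrically, matching a symbolic transition by the concrete transition under the same external input $\omega'$ (so that the $\sigma^k$ term vanishes), and $\mathcal{R}(X^{*})=X^{\overline{\eta}}$, $\mathcal{R}^{-1}(X^{\overline{\eta}})=X^{*}$ follow from the analogous identities for the $\mathcal{R}_k$. I expect the crux to be fact (b) and its use in condition (ii): the relation $\mathcal{R}$ only controls the genuine inter-$\Scc$ state mismatch by $\varepsilon^j$ --- not by the finer $\overline{\eta}(j)$ that a standalone application of Theorem~\ref{main2} would use --- so one must be sure this coarser mismatch still fits, simultaneously for all $\Scc_k$, inside the margin reserved for it in the first inequality of Algorithm~\ref{alg}; reconciling this throughout the $\DAG$ is exactly what the layered, reverse-topological assignment of quantization parameters in Algorithm~\ref{alg} is engineered to achieve, and that is where the bookkeeping is delicate.
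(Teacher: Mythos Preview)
Your approach is essentially the paper's: the same reduction $\mathcal{S}(\{S^{\eta}(\Sigma_i)\}_{i})=\mathcal{S}(\{S^{\overline{\eta}}(\Sigma_{\Scc_k})\}_{k})$, the same product relation $\mathcal{R}=\{(\xi,\xi'):\overline{V}_k(\xi_k,\xi'_k)\le\underline{\alpha}^k(\varepsilon^k)\ \forall k\}$, and the same verification of Definition~\ref{ASR}. Where you differ is in the treatment of condition~(ii): the paper simply invokes $S(\Sigma_{\Scc_k})\cong_{\varepsilon^k}S^{\overline{\eta}}(\Sigma_{\Scc_k})$ from Theorem~\ref{main2} and asserts that the resulting matching transitions $\xi'_k\rTo^{\nu'_k,\omega'_k}\xi'_{k,+}$ assemble into a transition of the composition, without checking that the $\nu'_k$ supplied by each component bisimulation can be taken to be the forced value $(\xi'_j)_{j\ne k}$. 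You instead rebuild the estimate of Proposition~\ref{Th1} directly for the composed system, which is more transparent and isolates precisely the inequality $\varepsilon^j\le\eta^{\ast}_j$ (your property~(b)) needed to absorb the genuine cross-$\Scc$ mismatch $\|\xi_j-\xi'_j\|\le\varepsilon^j$ into the budget reserved by line~11 of Algorithm~\ref{alg}. One caveat: property~(b) does not follow from Algorithm~\ref{alg} as literally written unless one also uses the convention (adopted in the worked example) that the $\eta^{\ast}_j$ selected while processing $\Scc_k$ are taken no smaller than $\overline{\eta}(k)$; the only bound the algorithm itself delivers is $\varepsilon^j=\min_{\Scc_l\in\Post(\Scc_j)}\overline{\eta}(l)\le\overline{\eta}(k)$, so you should make that reading explicit.
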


\begin{proof}
Define $S^{\overline{\eta}}(\Sigma_{\Scc_k}):=\mathcal{S}(\{S^{\eta}(\Sigma_j)\}_{j\in \mathcal{V}_k})$ for any $k\in [1;\overline{N}]$. 
First of all note that $\mathcal{S}(\{S^{\eta}(\Sigma_i)\}_{i\in [1;N]})=\mathcal{S}(\{S^{\overline{\eta}}(\Sigma_{\Scc_k})\}_{k\in [1;\overline{N}]})$ from which, in the sequel we show that $S(\Sigma) \cong_\varepsilon \mathcal{S}(\{S^{\overline{\eta}}(\Sigma_{\Scc_k})\}_{k\in [1;\overline{N}]})$. 
Let $\Xi^{\overline{\eta}}_{k}$ be the set of states of $S^{\overline{\eta}}(\Sigma_{\Scc_k})$ for any $k\in [1;\overline{N}]$. Consider the relation $\mathcal{R}\subseteq ( \Xi_1 \times \Xi_2 \times ... \times \Xi_{\overline{N}}) \times 
( \Xi^{\overline{\eta}}_{1} \times \Xi^{\overline{\eta}}_{2} \times ... \times \Xi^{\overline{\eta}}_{\overline{N}})$ defined by $((\xi_1,\xi_2,...,\xi_{\overline{N}}),(\xi^{\prime}_1,\xi^{\prime}_2,...,\xi^{\prime}_{\overline{N}})) \in \mathcal{R}$ if and only if 
 $\overline{V}_k(\xi_k,\xi^{\prime}_k) \leq \underline{\alpha}^k(\varepsilon^k)$. Consider any $((\xi_1,\xi_2,...,\xi_{\overline{N}}),(\xi^{\prime}_1,\xi^{\prime}_2,...,\xi^{\prime}_{\overline{N}})) \in \mathcal{R}$. 
 We first note that by Algorithm \ref{alg}, $\varepsilon^k\leq\varepsilon$, $k\in [1;\overline{N}]$; hence, $\Vert (\xi_1,\xi_2,...,\xi_{\overline{N}}) - (\xi^{\prime}_1,\xi^{\prime}_2,...,\xi^{\prime}_{\overline{N}}) \Vert = \max_{k\in [1;\overline{N}]} \Vert \xi_{k}-\xi^{\prime}_k \Vert \leq 
 \max_{k\in [1;\overline{N}]} (\underline{\alpha}^k)^{-1}(\overline{V}_{k}(\xi_{k},\xi^{\prime}_k)) \leq 
 \max_{k\in [1;\overline{N}]} \varepsilon^k \leq \varepsilon$ from which, condition (i) of Definition \ref{ASR} holds. We now show that also condition (ii) holds. Consider any $\omega=(\omega_1,\omega_2,...,\omega_{\overline{N}})\in U^\ast$ and the transition $(\xi_1,\xi_2,...,\xi_{\overline{N}}) \rTo^{\omega}_{*} (\xi_{1,+},\xi_{2,+},...,\xi_{\overline{N},+})$ in system $S(\Sigma)$. 
 By Definition \ref{lab}, for any $k\in [1;\overline{N}]$, the transition $\xi_k \rTo^{ \nu_{k},\omega_{k}} \xi_{k,+}$ is in system $S(\Sigma_{\Scc_k})$, for an appropriate input label $\nu_{k}$. By definition of the $\Post$ operator, the inequalities in line 11 of Algorithm \ref{alg} coincide with the ones in (\ref{Sstatem}) and (\ref{Sstatem1}). Hence, by Theorem \ref{main2}, $S(\Sigma_{\Scc_k}) \cong_{\varepsilon^k} S^{\overline{\eta}}(\Sigma_{\Scc_k})$ from which, there exists a transition $\xi^{\prime}_k \rTo^{\nu^{\prime}_k,\omega^{\prime}_k} \xi^{\prime}_{k,+}$ in $S^{\overline{\eta}}(\Sigma_{\Scc_k})$ such that $\overline{V}_{k}(\xi_{k,+},\xi^{\prime}_{i,k}) \leq \underline{\alpha}^k(\varepsilon^k)$. We first note that by definition of $\mathcal{R}$, $((\xi_{1,+},\xi_{2,+}, ... ,\xi_{\overline{N},+}),(\xi^{\prime}_{1,+},\xi^{\prime}_{2,+},...,\xi^{\prime}_{\overline{N},+})) \in \mathcal{R}$. 
Secondly by Definition \ref{lab}, the transition $(\xi^{\prime}_1,\xi^{\prime}_2,...,\xi^{\prime}_{\overline{N}}) \rTo^{\omega^{\prime}} (\xi^{\prime}_{1,+},\xi^{\prime}_{2,+},...,\xi^{\prime}_{\overline{N},+})$, with $\omega^{\prime}=(\omega^{\prime}_1,\omega^{\prime}_2,...,\omega^{\prime}_{\overline{N}})$, is in $S^{\overline{\eta}}(\Sigma_{\Scc_k})$ from which, condition (ii) in Definition \ref{ASR} holds. Condition (iii) in Definition \ref{ASR} can be shown by using similar arguments. Finally, for any $\xi_k\in \Xi_k$ by choosing $\xi^{\prime}_k=[\xi_k]_{\overline{\eta}(k)}\in \Xi^{\overline{\eta}}_{k}$ we get $\overline{V}_{k}(\xi_k,\xi^{\prime}_k) \leq \overline{\alpha}^{k}(\Vert \xi_k - \xi^{\prime}_k \Vert) \leq \overline{\alpha}^{k}(\overline{\eta}(k))\leq \overline{\alpha}^{k}(\varepsilon^k)$. In particular, the first inequality holds by the inequality (i) in Theorem \ref{main2}, the second one by definition of operator $[\, .\, ]$ and the last one by Algorithm \ref{alg}. Hence, $\mathcal{R}(\Xi_1\times \Xi_2, \times ... \times \Xi_{\overline{N}})=\Xi^{\overline{\eta}}_{1}\times \Xi^{\overline{\eta}}_{2} \times ... \times \Xi^{\overline{\eta}}_{\overline{N}}$. Conversely, for any $\xi^{\prime}_k \in \Xi^{\overline{\eta}}_{k}$, by picking $\xi_k=\xi^{\prime}_k$ one gets $\overline{V}_{k}(\xi_k,\xi^{\prime}_k)=0\leq \underline{\alpha}^{k}(\varepsilon^k)$ from which, $\mathcal{R}^{-1}(\Xi^{\overline{\eta}}_{1}\times \Xi^{\overline{\eta}}_{2} \times ... \times \Xi^{\overline{\eta}}_{\overline{N}})=\Xi_1\times \Xi_2 \times ... \times \Xi_{\overline{N}}$, which concludes the proof. 
\end{proof}

\begin{figure}[!t]
\begin{center}
\includegraphics[scale=0.45]{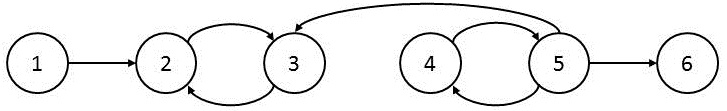}
\caption{Graph $\mathcal{G}=(\mathcal{V},\mathcal{E})$ associated with the network of control systems in the example reported in Section \ref{Sec:example}.}
\label{fig}
\end{center}
\end{figure}

\section{An academic example}\label{Sec:example}
Consider the network of control systems $\Sigma$ in (\ref{NetCS}) with $N=6$ and
\[
\begin{array}
{l}
f_{1}(x(t),u_{1}(t)) =  \kappa_{1,1} \frac{x_1(t)}{1+x_1^2(t)}+ u_1(t);
\\
f_{2}(x(t),u_{2}(t)) = \kappa_{2,1} \tanh(x_{2}(t)) + \kappa_{2,2} (\sech (x_{3}(t))-1) + x_{1}(t) ;
\\
f_{3}(x(t),u_{3}(t)) = \kappa_{3,1} x_3(t) + \kappa_{3,2} \sin(x_2(t))+ x_5(t)+u_3(t);
\\
f_{4}(x(t),u_{4}(t)) = \kappa_{4,1}(\cos(x_4(t))-1)+\kappa_{4,2}(\tanh(x_5(t)));
\\
f_{5}(x(t),u_{5}(t)) = \kappa_{5,1}\sin(x_5(t))+\kappa_{5,2}(\sech(x_4(t))-1)+u_{5}(t);
\\
f_{6}(x(t),u_{6}(t)) = \kappa_{6,1} \frac{x_6(t)}{1+\vert x_6(t)\vert}+x_5(t),
\end{array}
\]
where $x(t)=(x_{1}(t),x_{2}(t),...,x_{6}(t))$ for any $t \in \mathbb{N}_{0}$. 
We set $\kappa_{i,1}\in ]-1,1[$ for any $i\in[1;6]$, $\kappa_{i,2}\in \mathbb{R}$ for any $i\in[2;5]$, $\mathcal{X}_i=[-1,1]$ and $\mathcal{U}_i=[-1,1]$ for any $i\in[1;6]$. The goal is to construct a symbolic model of $\Sigma$ with accuracy $\varepsilon=0.01$. To this purpose we apply the results of the previous section. The resulting graph $\mathcal{G}=(\mathcal{V},\mathcal{E})$ is specified by $\mathcal{V}=[1;6]$ and 
$\mathcal{E}=\{(1,2),(2,3),(3,2),(4,5),(5,3),(5,4),(5,6)\}$ (see Fig. \ref{fig}). Strongly connected components of $\mathcal{G}$ are $\Scc_1$ with $\mathcal{V}_1 = \{1\}$,
$\Scc_2$ with $\mathcal{V}_2 = \{4,5\}$, $\Scc_3$ with $\mathcal{V}_3 = \{2,3\}$, and $\Scc_4$ with $\mathcal{V}_4 = \{6\}$. We are now ready to apply the three steps described in the previous section. Detailed calculations on this example are reported in \cite{Pola2014networkArxiv}. 
\\
\textit{Step \#1:} It is possible to show that $V_{i}:\mathbb{R}\times \mathbb{R} \rightarrow \mathbb{R}^{+}_{0}$, defined by  $V_i(x_i,x'_i)=|x_i - x'_i|$, $x_i,x'_i \in \mathbb{R}$, is a $\delta$--ISS Lyapunov function for subsystem $\Sigma_i$ for all $i\in [1;6]$. 
Hereafter, we only report detailed calculations for the case of $i=5$; the other cases follow analogously. By taking into account the Lipschitz property of the functions $z\to \sin(z)$, $z\to \sech(z)-1$, $z\in \mathbb{R}$, the following equalities/inequalities hold, for any $x_i, x_i'\in \mathbb{R}$, $i\in[1;6]$, $u_5, u_5' \in \mathbb{R}$:
\[
\begin{array}
{l}
V_5(f_5(x_1,x_2,\dots,x_6,u_5),f_5(x_1',x_2',\dots,x_6',u_5'))-V_5(x_5,x_5')=\nonumber \\ 
\vert f_5(x_1,x_2,\dots,x_6,u_5)-f_5(x_1',x_2',\dots,x_6',u_5') \vert - \vert x_5-x_5'\vert= \nonumber \\ 
\vert \kappa_{5,1}\sin(x_5)+\kappa_{5,2}(\sech(x_4)-1)+u_{5}- \kappa_{5,1}\sin(x_5')-\kappa_{5,2}(\sech(x_4')-1)-u_{5}'\vert -\vert x_5-x_5'\vert \le \nonumber \\ 
\vert \kappa_{5,1}\vert \vert x_5-x_5'\vert+\vert \kappa_{5,2}\vert \vert x_4-x_4'\vert +\vert u_5-u_5'\vert-\vert x_5-x_5' \vert\le \nonumber \\ 
-\left (1-\vert \kappa_{5,1}\vert \right )\vert x_5-x_5'\vert +\vert \kappa_{5,2}\vert \vert x_4-x_4'\vert +\vert u_5-u_5'\vert .
\end{array}
\]
The corresponding bounding constant and functions in Assumption (A1), are given by $L_5=2$ and for any $s\in \mathbb{R}_0^+$, $\overline \alpha_5(s)=\underline\alpha_5(s)=s$, $\rho_5(s)=\left (1-\vert \kappa_{5,1}\vert \right )s$, $\sigma_{5,4}(s)=\vert \kappa_{5,2}\vert s$, $\sigma_{5,j}(s)=0$, $j\in [1;6],\ j\ne 4$, $\sigma_5(s)=s$. By analogous computations we obtain for all subsystems, including $\Sigma_5$: 
$L_i=2$ for any $i\in [1;6]$ and, for any $s\in \mathbb{R}_0^+$, 
$\overline \alpha_i(s)=\underline \alpha_i(s)=s$, $\rho_i(s)=\left (1-\vert \kappa_{i,1}\vert \right )s$, $i\in[1;6]$, $\sigma_{i,j}(s)=0$, $i,j\in[1;6]$, $i\ne j$, $(i,j)\notin \left \{(2,1), (2,3), (3,2), (3,5), (4,5), (5,4), (6,5) \right \}$, $\sigma_{i,j}(s)=\vert \kappa_{i,2}\vert s$, $(i,j)\in \left \{(2,3), (3,2), (4,5), (5,4) \right \}$, $\sigma_{2,1}(s)=\sigma_{3,5}(s)=\sigma_{6,5}(s)=s$, $\sigma_i(s)=0$, $i=2,4,6$, $\sigma_1(s)=\sigma_3(s)=\sigma_5(s)=s$. Hence, Assumption (A1) is satisfied for any $i\in[1;6]$.\\
\textit{Step \#2:} We only need to apply Theorem \ref{main2} to strongly connected components $\Scc_2$ and $\Scc_3$ because $\Scc_1$ and $\Scc_4$ are composed each of a single control system. To this purpose it is readily seen that Assumption (A2) is verified for $g_i^{2}(s)=g_j^3(s)=s$, $s\in \mathbb{R}_0^+$, $i\in \{4,5\}$, $j\in \{2,3\}$. Moreover, 
$A_2=\diag\left( 1-\vert \kappa_{4,1}\vert,   1-\vert \kappa_{5,1}\vert\right)$, 
$A_3=\diag\left( 1-\vert \kappa_{2,1}\vert,   1-\vert \kappa_{3,1}\vert\right)$, 
$C_2 = [0,\vert \kappa_{4,2}\vert; \vert \kappa_{5,2}\vert ,0 ]$ and $C_3= [0,\vert \kappa_{2,2}\vert ; \vert \kappa_{3,2}\vert , 0 ]$. 
Condition $r(A^{-1}_k C_k)$ of Theorem \ref{main2} is satisfied for $k=2,3$, if and only if the small gain inequalities  
$\vert \kappa_{4,2}\kappa_{5,2}\vert / (\left (1-\vert \kappa_{4,1}\vert\right )\left (1-\vert \kappa_{5,1}\vert \right ))<1$ and 
$\vert \kappa_{2,2}\kappa_{3,2}\vert /(\left ( 1-\vert \kappa_{2,1}\vert \right )\left ( 1-\vert \kappa_{3,1}\vert\right ))<1$ hold.
For instance, the above inequalities are satisfied for $\kappa_{i,1}=0.5$, $\kappa_{i,2}=0.4$, $i=2,3,4,5$. 
Taking into account of the computations in \textit{Step \#1}, we now compute functions $\overline V_k$ and related constants and functions $L^k$, $\rho^k$, $\sigma^{k}_j$, $\sigma^k$, $k\in [1;4]$, $j\in [1;4]$, $j\ne k$.  For $k=1,4$, we have $\xi_1=x_1$, $\xi_1'=x_1'$, $\xi_4=x_6$, $\xi_4'=x_6'$, $\overline V_1(\xi_1, \xi_1')=V_1(x_1,x_1')$, $\overline V_4(\xi_4, \xi_4')=V_6(x_6,x_6')$. 
For $k=2,3$, we have 
$\xi_2=(x_4 ,x_5)$, $\xi_2'=(x_4' ,x_5')$, 
$\xi_3=(x_2 , x_3)$, 
$\xi_3'=(x_2' ,x_3')$, 
and
$\overline V_2(\xi_2,\xi_2')=\lambda_{i(1,2)}V_4(x_4,x_4')+\lambda_{i(2,2)}V_5(x_5,x_5')$, $\overline V_3(\xi_3,\xi_3')=\lambda_{i(1,3)}V_2(x_2,x_2')+\lambda_{i(2,3)}V_3(x_3,x_3')$. Thus, we have $L^2=2(\lambda_{i(1,2)}+\lambda_{i(2,2)})$, 
$L^3= 2(\lambda_{i(1,3)}+\lambda_{i(2,3)})$, $\underline \alpha^2(s)=\min\{\lambda_{i(1,2)},\lambda_{i(2,2)}\}s$, $\overline \alpha^2(s)=(\lambda_{i(1,2)}+\lambda_{i(2,2)})s$, $\underline \alpha^3(s)=\min\{\lambda_{i(1,3)},\lambda_{i(2,3)}\}s$, 
$\overline \alpha^3(s)=(\lambda_{i(1,3)}+\lambda_{i(2,3)})s$, 
$\sigma^2_j(s)=0$, $j=1,3,4$, $\sigma^2(s)=\lambda_{i(2,2)}s$, $\sigma^3_1(s)=\lambda_{i(1,3)}s$, $\sigma^3_2(s)=\lambda_{i(2,3)}s$, $\sigma^3(s)=\lambda_{i(2,3)}s$, $s\in \mathbb{R}_0^+$. Finally following \cite{SmallGainTh2}, let us choose $\underline\lambda_k$ such that each component of $\underline \lambda_k^T(A_k-C_k)>0$, $k=2,3$, and we can choose $\rho^2(s)=\left (\min\{\underline \lambda_2^T(A_2-C_2)\}/\max\{\underline \lambda_2\}\right )s$, $\rho^3(s)=\left (\min\{\underline\lambda_3^T(A_3-C_3)\}/\max\{\underline \lambda_3\}\right )s$, $s\in \mathbb{R}_0^+$. By the above choice of parameters $\kappa_{i,1}=0.5$, $\kappa_{i,2}=0.4$, $i\in[2;5]$, we 
can choose $\lambda_{i(1,2)}=11$, $\lambda_{i(2,2)}=13$, $\lambda_{i(1,3)}=1$ and $\lambda_{i(2,3)}=1$, by which we obtain 
$\rho^2(s)=0.0231s$ and $\rho^3(s)=0.1s$, $s\in \mathbb{R}_0^+$. \\
\textit{Step \#3:} We now apply Algorithm \ref{alg} to design the vector of quantization parameters $\overline{\eta}\in \mathbb{R}^{+}_{4}$. 
The leaf of the DAG associated with $\mathcal{G}$ is $\Scc_{4}$. Since $\overline{\eta}(4)=\infty$, condition in line 5 is satisfied and $\varepsilon^4$ is updated in line 7 to $\varepsilon=0.01$. Parameters $\overline{\eta}(4)$ and $\eta^{\ast}_2$ are chosen as $1.66\cdot 10^{-3}$ in line 11. The set $\SCC_\temp$ is updated in line 14 to $\{\Scc_{1},\Scc_{2},\Scc_{3}\}$. 
The leaf of the resulting $\SCC_\temp$ is now $\Scc_3$. Since $\overline{\eta}(3)=\infty$, condition in line 5 is satisfied and $\varepsilon^3$ is updated in line 9 to $\overline{\eta}(4)=1.66\cdot 10^{-3}$. Parameters $\overline{\eta}(3)$, $\eta^{\ast}_1$ and $\eta^{\ast}_2$ are chosen as $2.38 \cdot 10^{-5}$ in line 11. The set $\SCC_\temp$ is updated in line 14 to $\{\Scc_{1},\Scc_{2}\}$. 
The leaves of the resulting $\SCC_\temp$ are now $\Scc_1$ and $\Scc_2$. Let us start by processing $\Scc_1$. Since $\overline{\eta}(1)=\infty$, condition in line 5 is satisfied and $\varepsilon^1$ is updated in line 9 to $\overline{\eta}(3)=2.38 \cdot 10^{-5}$. Parameter $\overline{\eta}(1)$ is chosen as $3.96 \cdot 10^{-6}$ in line 11. Consider now $\Scc_2$. Since $\overline{\eta}(2)=\infty$, condition in line 5 is satisfied and $\varepsilon^2$ is updated in line 9 to $\min\{\overline{\eta}(3),\overline{\eta}(4)\}=2.38 \cdot 10^{-5}$. Parameter $\overline{\eta}(2)$ is chosen as $9.91 \cdot 10^{-8}$ in line 11. Set $\SCC_\temp$ is updated in line 14 to the empty set and the algorithm is over. 
We finally obtain 
$\overline{\eta}=(3.96 \cdot 10^{-6},9.91 \cdot 10^{-8},2.38 \cdot 10^{-5},1.66\cdot 10^{-3})$ 
and consequently, 
$\eta=(3.96 \cdot 10^{-6},2.38 \cdot 10^{-5},2.38 \cdot 10^{-5},9.91 \cdot 10^{-8}, 9.91 \cdot 10^{-8},1.66\cdot 10^{-3})$. \\
We conclude this section by performing a complexity analysis. By a straightforward computation, space and time complexity in computing the collection of symbolic models $S^{\eta}(\Sigma_i)$ with $i\in [1;6]$ are given by 
$\Sigma_{i\in [1;6]} \scomplex(S^{\eta}(\Sigma_i))=1.68 \cdot 10^{29}$ and $\Sigma_{i\in [1;6]} \tcomplex(S^{\eta}(\Sigma_i))=2.02\cdot 10^{22}$, respectively. 
The space and time complexity in constructing the composition $\mathcal{S}(\{S^{\eta}(\Sigma_i)\}_{i\in [1;6]})$ are given by $ \scomplex (\mathcal{S}(\{S^{\eta}(\Sigma_i)\}_{i\in [1;6]}))= 
5.31 \cdot 10^{99}$ and $\tcomplex (\mathcal{S}(\{S^{\eta}(\Sigma_i)\}_{i\in [1;6]}))= 3.04\cdot 10^{66}$. 
We now compare the above computational complexity with the computational complexity arising when applying the discrete--time version of the results reported in \cite{PolaAutom2008}. 
To this purpose we consider $\Sigma$ as a monolithic control system and apply Proposition \ref{Th1}, which corresponds to Theorem 5.1 of \cite{PolaAutom2008} in the discrete--time domain. It is possible to show that function $V^{\ast}$ defined by $V^{\ast}((x_1,x_2,...,x_6),(x'_1,x'_2,...,x'_6))=\sum_{i\in [1;6]} \lambda^{\ast}(i)\vert x_i -x'_i \vert$, with $\lambda^{\ast}=(3,1,1,11,13,1)$ 
(note that $\lambda_i$, $i\in [2;5]$ are the same used above for subsystems $\Scc_2$, $\Scc_3$), 
is a $\delta$--ISS Lyapunov function for $\Sigma$. Corresponding bounding constant and functions associated with $V^\ast$ are given by 
$L^{\ast}=60$, $\underline{\alpha}^{\ast}(s)=s$, $\overline{\alpha}^{\ast}(s)=30s$, $\rho^{\ast}(s)=7.7\cdot 10^{-3}\, s$, $\sigma^{\ast}(s)=17s$, $s\in\mathbb{R}^{+}_{0}$. By applying Proposition \ref{Th1} to the entire control system $\Sigma$, the (uniform) quantization parameter $\eta^{\ast}$ obtained is upper bounded by $10^{-6}$; we set $\eta^{\ast}=10^{-6}$. The corresponding space and time complexity in computing the symbolic model associated with $S(\Sigma)$, denoted $S^{\eta^{\ast}}(\Sigma)$, are given by $\scomplex(S^{\eta^{\ast}}(\Sigma))=2.62\cdot 10^{113}$ and $\tcomplex(S^{\eta^{\ast}}(\Sigma))=4.11 \cdot 10^{75}$. 

\section{Conclusions} \label{Sec:discussion}
In this paper we proposed networks of symbolic models that approximate networks of discrete--time nonlinear control systems in the sense of approximate bisimulation for any desired accuracy. 
In future work we plan to extend the results of this paper to continuous--time nonlinear control systems. The extension is not straightforward because it requires appropriate techniques to find finite approximations of trajectories of continuous--time control systems; in this regard, spline based approximation schemes proposed in \cite{PolaSCL10} and \cite{BorriIJC2012} can be of help.

\bibliographystyle{plain}
\bibliography{biblio2}

\section{Appendix}

\subsection{Notation}
The symbol $\card(X)$ indicates the cardinality of a finite set $X$. Given a pair of sets $X$ and $Y$ and a relation $\mathcal{R}\subseteq X\times Y$, the symbol $\mathcal{R}^{-1}$ denotes the inverse relation of $\mathcal{R}$, i.e.
$\mathcal{R}^{-1}=\{(y,x)\in Y\times X:( x,y)\in \mathcal{R}\}$. We denote $\mathcal{R}(X)=\{y\in Y | \exists x\in X \text{ s.t. } (x,y)\in \mathcal{R}\}$ and $\mathcal{R}^{-1}(Y)=\{x\in X | \exists y\in Y \text{ s.t. }  (x,y)\in \mathcal{R}\}$. The symbols $\mathbb{N}_0$, $\mathbb{Z}$, $\mathbb{R}$, $\mathbb{R}^{+}$ and $\mathbb{R}_{0}^{+}$ denote the set of nonnegative integer, integer, real, positive real, and nonnegative real numbers, respectively. The symbol $\mathbb{R}^{+}_{n}$ denotes the positive orthant of $\mathbb{R}^{n}$. Given $n\in \mathbb{N}_0$ and $n>0$ we denote by $[1;n]$ the set $\{1,2,...,n\}$. Given $a_1,a_2,...,a_n\in\mathbb{R}$, the symbol $\diag(a_1,a_2,...,a_n)$ denotes the diagonal matrix whose entries in the diagonal are $a_i$. 
For a matrix $A=(a_{ij})_{i,j\in [1;n]}$, the inequality $A>0$ (resp. $A<0$) is meant component-wise, i.e. $a_{ij}>0$ (resp. $a_{ij}<0$) for all $i,j\in [1;n]$.
The symbol $r(A)$ denotes the spectral radius of a square matrix $A$, i.e. $r(A)=\max_{i=1,2,...,n}|\lambda_i|$, where $\lambda_i$, $i=1,2,...,n$, are the eigenvalues of $A$.  
Given $a\in \mathbb{R}$, the symbol $|a|$ denotes the absolute value of $a$ and $\lceil a \rceil$ the ceiling of $a$, i.e. $\lceil a \rceil=\min\{{n\in\mathbb{Z} \vert n\geq a}\}$. 
Given a vector $x\in\mathbb{R}^{n}$ we denote by $x(i)$ the $i$--th element of $x$ and by $\Vert x\Vert$ the infinity norm of $x$. Given $a\in\mathbb{R}$ and $\Omega\subseteq \mathbb{R}^{n}$ the symbol $a\,\Omega$ denotes the set $\{y\in\mathbb{R}^{n}| \exists (\omega_1,\omega_2,...,\omega_n)\in \Omega \text{ s.t. } y=(a\omega_1,a\omega_2,...,a\omega_n)\}$. 
The identity function is denoted by $\Id$. A continuous function \mbox{$\gamma:\mathbb{R}_{0}^{+}\rightarrow\mathbb{R}_{0}^{+}$} is said to belong to class $\mathcal{K}$ if it is strictly increasing and \mbox{$\gamma(0)=0$}; function $\gamma$ is said to belong to class $\mathcal{K}_{\infty}$ if \mbox{$\gamma\in\mathcal{K}$} and $\gamma(r)\rightarrow\infty$ as $r\rightarrow\infty$. 
Given $\eta\in\mathbb{R}^{+}$ and $X\subseteq \mathbb{R}^{n}$, we set $[X]_{\eta}=(\eta\,\mathbb{Z}^{n}) \cap X$; if $X$ is convex and with interior there always exists $\eta\in\mathbb{R}^{+}$ such that for any $x\in X$ there exists $y\in [X]_{\eta}$ such that $\Vert x-y  \Vert \leq \eta$. Given $x=(x_1,x_2,...,x_n)\in  \mathbb{R}^{n}$ and $\eta\in\mathbb{R}^{+}$, define $[x]_{\eta}=(\eta\lceil x_1/\eta \rceil,\eta\lceil x_2/\eta \rceil,...,\eta\lceil x_n/\eta \rceil)\in \eta \mathbb{Z}^{n}$; note that $\Vert x- [x]_{\eta} \Vert \leq \eta$. 
A directed graph $\mathcal{G}$ is specified by a pair $(\mathcal{V},\mathcal{E})$ where $\mathcal{V}$ is the set of vertices and $\mathcal{E}\subseteq \mathcal{V} \times \mathcal{V}$ is the set of edges. 
A pair $(\mathcal{V}',\mathcal{E}')$ is a subgraph of $\mathcal{G}=(\mathcal{V},\mathcal{E})$ if $\mathcal{V}' \subset \mathcal{V}$ and $\mathcal{E}' \subset \mathcal{E}$. 
Strongly connected components of a directed graph $\mathcal{G}$ are its maximal strongly connected subgraphs.

\subsection{Systems, Composition and Approximate Equivalence}\label{sec:ApproxEquiv}
We start by introducing the notion of systems that we use as a unified mathematical paradigm to describe nonlinear control systems and their symbolic models.

\begin{definition}
\label{systems}
\cite{paulo}
A system is a quintuple $S=(X,U,\rTo,Y,H)$, consisting of a set of states $X$, a set of inputs $U$, a transition relation $\rTo \subseteq X\times U\times X$, a set of outputs $Y$ and an output function $H:X\rightarrow Y$.
\end{definition}

A transition $(x,u,x^{\prime})\in\rTo$ of $S$ is denoted by $x\rTo^{u}x^{\prime}$.  
System $S$ is said to be \textit{symbolic} if $X$ and $U$ are finite sets and \textit{metric} if the output set $Y$ is equipped with a metric $\mathbf{d}:Y\times Y\rightarrow\mathbb{R}_{0}^{+}$. Composition of systems in formalized hereafter.

\begin{definition}
\label{lab}
Given a collection of systems $S_{i} = (X_{i} , X_1 \times ... \times X_{i-1} \times X_{i+1} \times ... \times X_{N} \times U_{i},\rTo_{i},Y_{i},H_{i})$, ($i\in [1;N]$), define the system $\mathcal{S}(\{S_i\}_{i\in [1;N]})= (X,U,\rTo,Y,H)$ where $X=X_{1}\times X_{2}\times ... \times X_{N}$, $U=U_{1}\times U_{2}\times ... \times U_{N}$,  $(x_{1},x_{2},...,x_{N})$ $\rTo^{(u_1,u_2,...,u_N)} (x_{1}^{+},x_{2}^{+},...,x_{N}^{+})$ if $x_{i} \rTo_{i}^{(x_1,...,x_{i-1},x_{i+1},...,x_N,u_i)} x_{i}^{+}$ for any $i\in [1;N]$, $Y=X_{1}\times X_{2} \times ... \times X_{N}$ and $H(x)=x$. 
\end{definition}

In the above definition, note that if systems $S_i$ are equipped with metric $\mathbf{d}_{i}$ then system $\mathcal{S}(\{S_i\}_{i\in [1;N]})$ is equipped with metric $\mathbf{d}((x_1,x_2,...,x_N),(x'_1,x'_2,...,x'_N))=\max_{i\in [1;N]}\mathbf{d}_{i}(x_{i},x'_{i})$. We conclude this section by recalling the notion of approximate bisimulation.

\begin{definition}
\cite{AB-TAC07}
\label{ASR}
Let $S^{i}=(X^{i},U^{i},\rTo_{i},Y^{i},H^{i})$ ($i=1,2$) be metric systems with the same output sets $Y^{1}=Y^{2}$ and metric $\mathbf{d}$, and let $\varepsilon\in\mathbb{R}^{+}_{0}$ be a given precision. A relation $\mathcal{R}\subseteq X^{1}\times X^{2}$ 
is an $\varepsilon$--approximate bisimulation relation if for all $(x^1,x^2)\in \mathcal{R}$ the following conditions are satisfied: 
(i) $\mathbf{d}(H^{1}(x^{1}),H^{2}(x^{2}))\leq\varepsilon$; 
(ii) For any $x^{1}\rTo_{1}^{u^{1}}x^{1}_{+}$ there exists $x^{2}\rTo_{2}^{u^{2}}x^{2}_{+}$ such that $(x^{1}_{+},x^{2}_{+})\in \mathcal{R}$;
(iii) For any $x^{2}\rTo_{2}^{u^{2}}x^{2}_{+}$ there exists $x^{1}\rTo_{1}^{u^{1}}x^{1}_{+}$ such that $(x^{1}_{+},x^{2}_{+})\in \mathcal{R}$.
Systems $S^1$ and $S^2$ are approximately bisimilar with precision $\varepsilon$, denoted by $S^1 \cong_{\varepsilon} S^2$, if there exists an $\varepsilon$--approximate bisimulation relation $\mathcal{R}$ between $S^{1}$ and $S^{2}$ such that $\mathcal{R}(X^{1})=X^{2}$ and  $\mathcal{R}^{-1}(X^{2})=X^{1}$. 
\end{definition}

\end{document}